\numberwithin{equation}{section}
\theoremstyle{plain}
\newtheorem{theorem}{Theorem}[section]
\newtheorem{lemma}[theorem]{Lemma}
\newtheorem{proposition}[theorem]{Proposition}
\newtheorem{corollary}[theorem]{Corollary}
\theoremstyle{definition}
\newtheorem{definition}[theorem]{Definition}
\newtheorem{notation}[theorem]{Notation}
\newtheorem{construction}[theorem]{Construction}
\newtheorem{claim}{Claim}[theorem]
\newtheorem{subclaim}{Subclaim}
\newtheorem{problem}[theorem]{Open Problem}
\newcommand{\alg}[1]{\mathbf{#1}}
\newcommand{\var}[1]{\mathcal{#1}}
\newcommand{\Th}[1]{\mathrm{Th}\left( #1 \right)}
\newcommand{\fin}{\mathrm{fin}}
\newcommand{\Con}[1]{\mathrm{Con}\left(#1 \right)}
\newcommand{\HSP}[1]{\mathrm{HSP}\left(#1 \right)}
\newcommand{\inangle}[1]{\left \langle #1 \right \rangle}
\newcommand{\vpair}[2]{\begin{pmatrix} #1 \\ #2 \end{pmatrix}}
\newcommand{\Rad}[1]{\mathrm{Rad}\left(#1 \right)}
\newcommand{\frz}[2]{{#1}^{#2}}
\newcommand{\frzflt}[2]{{#1}^{#2 \flat}}
\newcommand{\Pol}[2]{\mathrm{Pol}_{#1}\left( #2 \right)}
\newcommand{\Cg}[3]{\mathrm{Cg}_{#1}\left\langle #2, #3 \right\rangle}
\newcommand{\TC}[3]{\mathrm{C}\left( #1, #2 ; #3 \right)}
\newcommand{\point}[1]{\mathbf{#1}}
\newcommand{\NRINV}{\mathsf{NRINV}}
\newcommand{\GEN}{\mathsf{GEN}}
\newcommand{\EDGEGEN}{\mathsf{EDGEGEN}}
\newcommand{\VERTEXGEN}{\mathsf{VERTEXGEN}}
\newcommand{\EDGE}{\mathsf{EDGE}}
\newcommand{\VERTEXRED}{\mathsf{VERTEXRED}}
\newcommand{\VERTEXBLUE}{\mathsf{VERTEXBLUE}}
\newcommand{\pfend}[1]{\hspace{\stretch{1}}\(\dashv_{#1}\)}
\begin{document}

\title[Arity bounds in FD varieties]{Bounding essential arities of term operations in finitely decidable varieties}

\author[M. Smedberg]{Matthew Smedberg}
\email{matthew.smedberg@vanderbilt.edu}
\urladdr{http://www.vanderbilt.edu/math/people/smedberg}
\address{Mathematics Department\\
Vanderbilt University\\Nashville, TN 37240\\USA}

\begin{abstract}
  Let \( \alg{A} \) be a finite algebra generating a finitely decidable variety and having nontrivial strongly solvable radical \( \tau \). We provide an improved bound on the number of variables in which a term can be sensitive to changes within \(\tau\). We utilize a multi-sorted algebraic construction, amalgamating the methods developed by Valeriote and McKenzie for the investigation of strongly abelian locally finite decidable varieties with those of Idziak for locally finite congruence modular finitely decidable varieties.
\end{abstract}

\maketitle

Among the algorithmic properties most investigated by algebraists is the problem of when a given computably axiomatizable class \( \var{K} \) of first-order structures will have computable first-order theory too. This problem was investigated for varieties of groups and rings beginning in the 1950s, with signal contributions from Tarski and his students in the USA (\cite{TMR1953}, \cite{Sz1955}) and from the Russian school of Luzin, Ershov et al. (\cite{Malc1960}, \cite{Ershov1972}, \cite{Zam1976}, \cite{Zam1978}).

For many but not all interesting classes \( \var{K} \), it was shown that not only is \( \Th{\var{K}} \) undecidable, but \( \Th{\var{K}_{\mathrm{fin}}} \) may be as well, where \( \var{K}_{\mathrm{fin}} \) denotes the class of all finite structures in \( \var{K} \). We will say that \( \var{K} \) is \emph{(finitely) decidable} if \( \Th{\var{K}} \) (resp.~\( \Th{\var{K}_{\mathrm{fin}}} \)) is a computable set of sentences.

For example, any variety of groups has decidable theory iff it contains only abelian groups (as is showed in \cite{Sz1955} and \cite{Zam1978}). Szmielew actually showed more: \emph{every} theory of abelian groups is decidable, which together with the famous construction by Olshanskii of a variety of groups whose smallest nonabelian member is infinite (\cite{Olsh1991}), shows that a variety can be undecidable and simultaneously finitely decidable. (Zamyatin had given an earlier example of this for varieties of rings in \cite{Zam1976}.)

We restrict our attention in this paper to varieties of abstract algebras in a finite language. The natural questions here are: given a computably axiomatizable variety \( \var{V} \) (in particular, a variety of the form \( \HSP{\alg{A}} \) for some finite algebra \( \alg{A} \)), is \( \Th{\var{V}} \) (resp. \( \Th{\var{V}_{\mathrm{fin}}} \)) a computable set of sentences? One immediately sees that \( \Th{\var{V}} \) is computably enumerable, so the one question is equivalent to the computable enumerability of the set of sentences refutable in some member of \( \var{V} \); on the other hand, it is also clear that the set of sentences \emph{refuted} in some finite member of \( \var{V} \) is computably enumerable, while the set of sentences true in all these algebras may not be.

In \cite{MV1989}, McKenzie and Valeriote showed that locally finite \emph{decidable} varieties have a very restricted structure theory. Such a variety must decompose as the varietal product of a discriminator variety, a variety of modules, and a strongly abelian variety. In particular, 

\begin{corollary}\label{cor:SS->SA}
If \( \var{V} \) is a locally finite decidable variety, then every strongly solvable congruence of an algebra in \( \var{V} \) is strongly abelian.\end{corollary}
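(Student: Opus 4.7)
The proof is presented as a corollary of the McKenzie-Valeriote structure theorem cited just above, so the plan is to chase the decomposition through to the congruence lattice and verify the claim on each factor.

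First I would invoke the decomposition \( \var{V} = \var{V}_D \otimes \var{V}_M \otimes \var{V}_{SA} \), where \( \var{V}_D \) is a discriminator variety, \( \var{V}_M \) is a variety of modules (possibly over different rings, in general affine), and \( \var{V}_{SA} \) is strongly abelian. An arbitrary \( \alg{A} \in \var{V} \) then decomposes (up to isomorphism and term equivalence) as \( \alg{A} \cong \alg{A}_D \times \alg{A}_M \times \alg{A}_{SA} \), with each factor belonging to the corresponding subvariety; in particular \( \Con{\alg{A}} \) is (canonically isomorphic to) the product \( \Con{\alg{A}_D} \times \Con{\alg{A}_M} \times \Con{\alg{A}_{SA}} \), and each factor congruence of \( \alg{A} \) is the join of three component congruences living in the three factor kernels.

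Next I would analyze the strongly solvable congruences of each factor using tame congruence theory. In a discriminator variety every prime quotient has type \( \mathbf{3} \) (boolean), so \( \alg{A}_D \) has no nontrivial strongly solvable congruence; in a module (or affine) variety every prime quotient has type \( \mathbf{2} \) (affine), so likewise \( \alg{A}_M \) admits no nontrivial strongly solvable congruence. On the third factor, by hypothesis \( \var{V}_{SA} \) is strongly abelian, hence every congruence of \( \alg{A}_{SA} \) --- in particular its strongly solvable ones --- is strongly abelian. Consequently, if \( \theta \) is a strongly solvable congruence of \( \alg{A} \), the projections of \( \theta \) onto \( \Con{\alg{A}_D} \) and \( \Con{\alg{A}_M} \) must be trivial, so \( \theta \) lives entirely in the \( \var{V}_{SA} \)-factor and is therefore strongly abelian on the whole of \( \alg{A} \).

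The main obstacle in this plan is purely bookkeeping: one must justify that the varietal product decomposition lifts faithfully to congruence lattices, and that a property as fine as strong abelianness is preserved under the isomorphism \( \alg{A} \cong \alg{A}_D \times \alg{A}_M \times \alg{A}_{SA} \). This follows from the definition of varietal product (the factor operations commute via the independence terms supplied by \cite{MV1989}), so that the term clone of \( \alg{A} \) acts coordinatewise on triples and the strong abelianness witnessed by terms of \( \alg{A}_{SA} \) transfers immediately to \( \alg{A} \). Everything else is the routine verification that discriminator algebras and modules have only the expected tame congruence types.
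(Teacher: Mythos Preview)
Your proposal is correct and is precisely the argument the paper has in mind: the corollary is stated immediately after the McKenzie--Valeriote decomposition theorem with only the words ``In particular,'' so the intended proof is exactly the factor-by-factor analysis you carry out. The only detail worth tightening is the transfer of types across the product decomposition --- once you note that a prime quotient in $\Con{\alg{A}}\cong\Con{\alg{A}_D}\times\Con{\alg{A}_M}\times\Con{\alg{A}_{SA}}$ lives over a single factor and inherits its type from there, the rest is as routine as you say.
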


While the analogues betwen the decidability problem and the finite decidability problem are strong, not all the necessary conditions for decidability transfer down; Corollary \ref{cor:SS->SA} does, however (\cite{McSmed2013}) and we will make implicit use of it throughout this paper.

One of the properties that does \emph{not} continue to hold is the direct decomposition theorem. In \cite{Idz1997}, P.~Idziak gave a characterization of finitely decidable locally finite varieties with modular congruence lattices; this characterization essentially gives a recipe for building a variety with no possible direct decomposition into discriminator and affine varietal factors. One goal of the present paper is to suggest a potential reformulation of the direct product criterion to make it work in the finitely decidable setting.

As mentioned, Idziak's characterization extends only to congruence-modular varieties. By \cite{VW1992} in conjunction with \cite[Theorem 8.5]{HM1988}, a locally finite finitely decidable variety \( \var{V} \) is congruence-modular iff no algebra in \( \var{V} \) has a nontrivial strongly solvable congruence. (For a quick grounding in the notions of (strong) abelianness and solvability, see \cite[Chapter 0]{MV1989}. As mentioned, we will assume that the notions of ``strongly solvable'' and ``strongly abelian'' coincide in all the varieties considered here.) In our hopes to extend Idziak's characterization, we will be guided by a construction in \cite[Chapter 11]{MV1989}, which takes a strongly abelian first-order variety \( \var{V} \) and recasts it as a variety \( \var{V}^\flat \) in a multi-sorted language; the main theorem associated with this construction asserts that \( \var{V} \) is decidable iff it is finitely decidable, and both are equivalent to \( \var{V}^\flat \)'s being essentially unary.

The plan of this paper is as follows: We will very quickly state definitions and needed theorems from the literature in Section \ref{sec:review}. Then, since the construction of \( \var{V}^\flat \) does not carry over directly to a nonabelian setting, we build the appropriate generalization, constructing from a fixed finite algebra \( \alg{A} \) with a strongly abelian congruence \( \tau \) the multi-sorted first-order languages \( \frz{L}{\tau} \) in Section \ref{sec:frz} and \( \frzflt{L}{\tau} \) in Section \ref{sec:frzflt}. Finally, Section \ref{sec:main interpretation} will contain the proof of the main theorem, which proceeds by semantic interpretation.

\section{Definitions and Preliminaries}\label{sec:review}

\begin{definition}
  Let
  \[ X_1 \times X_2 \times \cdots \times X_n \stackrel{f}{\rightarrow} Y \]be a function. We say that \(f\) \emph{depends essentially} on its \(i^{\text{th}}\) variable if there exist \(a \neq a' \in X_i \) and \(b_j \in X_j \; (j \neq i)\) so that
  \[ f(b_1, b_2, \ldots, b_{i-1},a,b_{i+1},\ldots,b_n) \neq f(b_1, b_2, \ldots, b_{i-1},a',b_{i+1},\ldots,b_n) \](Clearly, if \(f\) depends on its \(i^{\text{th}}\) variable, it follows that \(|X_i| > 1\).)
\end{definition}

In particular, if \( f \) is a term of the (ordinary first-order) algebra \( \alg{A} \), unless otherwise specified each \( X_i \) is \( A \); if \( \alg{M} \) is a multi-sorted algebra, the default assumption is that each \( X_i \) is the entire sort associated to the corresponding input variable of \( f \).

\begin{definition}
  Let \(A\) be a finite set. We say that the operation \(d(v_1, \ldots, v_K)\) is a \emph{decomposition operation} on \( A \) if \begin{itemize}
    \item \(d(A,\ldots, A) \subseteq A \);
    \item the action of \(d\) on \(A\) depends on all its variables;
    \item \(d(x,\ldots, x) = x \) for all \( x \in A \); and
    \item \begin{align}\label{eq:decomp} & d(d(x_{1,1},\ldots, x_{1,K}),d(x_{2,1},\ldots,x_{2,K}),\ldots,d(x_{K,1},\ldots,x_{K,K})) \notag \\ &\qquad = \\ &d(x_{1,1},x_{2,2},\ldots,x_{K,K})\notag \end{align} for all \(x_{i,j} \in A\).
  \end{itemize}

  Typically, we will have in mind an algebraic structure on \(A\) or perhaps on some superset of \(A\). If the operation \(d\) is a term operation (resp.~polynomial operation) of the structure \( \alg{A} \), we will call it a \emph{decomposition term} (resp.~\emph{decomposition polynomial}).
\end{definition}

\begin{proposition}[{\cite[Lemma 11.3]{MV1989}}]
  If \( \alg{A} \) is a strongly abelian algebra having an idempotent term \(t(v_1, \ldots, v_K)\) depending essentially on all its variables, then \( \alg{A} \) has a decomposition term of arity \(K\).
\end{proposition}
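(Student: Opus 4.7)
The plan is to show that $d := t$ already serves as a decomposition operation; the first three bulleted conditions hold by hypothesis, so only identity \eqref{eq:decomp} requires real work. The first step is to extract from strong abelianness the rectangular-kernel structure of $t$. For each $i \le K$, define an equivalence $E_i$ on $A$ by $a \mathrel{E_i} a'$ iff $t(b_1,\ldots,a,\ldots,b_K) = t(b_1,\ldots,a',\ldots,b_K)$ for some (equivalently, by the strong term condition, every) filler $(b_k)_{k \neq i}$; strong abelianness is precisely the statement that $\ker t = E_1 \times \cdots \times E_K$, and essential dependence of $t$ on each position makes each $E_i$ a proper equivalence.

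The crux of the argument is to identify $A$ with $\prod_i A/E_i$ via two mutually inverse bijections. On one side, $t$ factors through an injection $\tilde t\colon \prod_i A/E_i \hookrightarrow A$ defined by $\tilde t([a_1]_{E_1},\ldots,[a_K]_{E_K}) := t(a_1,\ldots,a_K)$. On the other side, idempotence forces the coordinate map $\phi(a) := ([a]_{E_1},\ldots,[a]_{E_K})$ to be injective $A \hookrightarrow \prod_i A/E_i$: from $\phi(a) = \phi(b)$ the rectangular-kernel characterization yields $t(a,\ldots,a) = t(b,\ldots,b)$, which idempotence collapses to $a = b$. Comparing the two injections by cardinality forces $|A| = |\prod_i A/E_i|$, so both $\phi$ and $\tilde t$ are bijections. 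Since $\tilde t \circ \phi = \mathrm{id}_A$ by idempotence, $\phi$ and $\tilde t$ are mutual inverses, and reading off the $j$-th coordinate of the composition $\phi \circ \tilde t = \mathrm{id}$ yields the \emph{coordinate-recovery identity} $[t(\bar a)]_{E_j} = [a_j]_{E_j}$, i.e., $t(\bar a) \mathrel{E_j} a_j$, for every $\bar a$ and each $j$.

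Finally, applying this coordinate-recovery identity to each row tuple $\bar x_j = (x_{j,1},\ldots,x_{j,K})$ gives $t(\bar x_j) \mathrel{E_j} x_{j,j}$, so the $K$-tuples $(t(\bar x_1),\ldots,t(\bar x_K))$ and $(x_{1,1},\ldots,x_{K,K})$ are coordinatewise $E_i$-equivalent, and the rectangular-kernel characterization of $t$-equality then yields equation~\eqref{eq:decomp}. The main obstacle is the cardinality/bijectivity pivot: the coordinate-recovery identity can fail in general without \emph{both} essentiality of all variables (to make each $E_i$ proper and hence $\tilde t$ injective) and idempotence (to make $\phi$ injective); once both injections are in hand, the decomposition identity drops out mechanically.
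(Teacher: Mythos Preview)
Your argument is correct; the paper does not prove this proposition but merely cites it from \cite{MV1989}, and your approach (showing that $t$ itself satisfies the decomposition identity via the rectangular-kernel characterization of strongly abelian terms) is the standard one. One small cleanup: the cardinality step is unnecessary, since $\tilde t\circ\phi=\mathrm{id}_A$ already makes $\tilde t$ surjective, and together with its injectivity (which follows from $\ker t=\prod_i E_i$ alone, not from the $E_i$ being proper as your closing remark suggests) this gives the bijection directly without any finiteness assumption.
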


It follows that in such an algebra, if \(t\) is a term which depends on all its variables and such that \(t(x,x,\ldots,x) \) is a permutation, then there is a decomposition term of the same arity as \(t\). 

Decomposition operators have a nice description in the case where \( \alg{A} \) is strongly abelian:

\begin{proposition}[{\cite[Lemma 11.4]{MV1989}}]
  If \( \alg{A} \) is a finite strongly abelian algebra and \(K\) the largest arity of a decomposition term \(d\) on \( \alg{A} \), then there exist finite sets \(A_1, \ldots, A_K \) and an isomorphism \(\varphi\) from \( \alg{A} \) to a structure \( \alg{B} \) with underlying set \( A_1 \times \cdots \times A_K \) such that, if we denote
\[ \varphi(a) = \begin{matrix} a^1 \\ \vdots \\ a^K \end{matrix} \]
then
\[ d^{\alg{B}}(\varphi(a_1), \varphi(a_2), \ldots, \varphi(a_K)) = \varphi \left(d^{\alg{A}} \begin{pmatrix}
  a_1^1 & a_2^1 & \cdots & a_K^1 \\
  a_1^2 & a_2^2 & \cdots & a_K^1 \\
  \vdots & \vdots & \ddots & \vdots \\
  a_1^K & a_2^K & \cdots & a_K^K
\end{pmatrix} \right)=
\begin{matrix}
  a_1^1 \\ a_2^2 \\ \vdots \\ a_K^K
\end{matrix} \]
\end{proposition}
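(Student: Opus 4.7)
My plan is to construct the isomorphism $\varphi$ explicitly via unary ``projection'' polynomials derived from $d$. Fix any base point $c \in A$ and, for each $i \in \{1, \ldots, K\}$, let $p_i \colon A \to A$ be the unary polynomial $p_i(x) := d(c, \ldots, c, x, c, \ldots, c)$ with $x$ in position $i$. Set $A_i := p_i(A)$ and $\varphi(a) := (p_1(a), \ldots, p_K(a)) \in A_1 \times \cdots \times A_K$. After showing that $\varphi$ is a bijection, I will transport the algebra structure along $\varphi$ (making $\varphi$ tautologically an isomorphism) and verify the explicit coordinate form of $d^{\alg{B}}$.

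The entire argument reduces to a handful of judicious specializations of the decomposition equation \eqref{eq:decomp}; the main difficulty is notational bookkeeping rather than any deep idea. First, specializing \eqref{eq:decomp} with $x_{i,i} = x$ and all other $x_{j,k} = c$ reduces it to $p_i(p_i(x)) = p_i(x)$, so $A_i$ is precisely the set of fixed points of $p_i$. Second, specializing with $x_{j,k} = a$ when $k = j$ and $c$ otherwise yields the retract identity $d(p_1(a), \ldots, p_K(a)) = d(a, \ldots, a) = a$, which proves $\varphi$ injective and suggests the candidate inverse $(b_1, \ldots, b_K) \mapsto d(b_1, \ldots, b_K)$. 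Third, to check surjectivity and that this map really is inverse, given $(b_1, \ldots, b_K) \in A_1 \times \cdots \times A_K$, let $a := d(b_1, \ldots, b_K)$, rewrite $c = d(c, \ldots, c)$, and specialize \eqref{eq:decomp} with row $j$ equal to $(b_1, \ldots, b_K)$ and all other rows constantly $c$: the left side is $p_j(a)$, while the right side collapses to $d(c, \ldots, c, b_j, c, \ldots, c) = p_j(b_j) = b_j$.

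Transporting the algebra structure of $\alg{A}$ to $A_1 \times \cdots \times A_K$ along $\varphi$ makes $\varphi$ an isomorphism by fiat; only the closed form of $d^{\alg{B}}$ must be checked. The $j$-th coordinate of $d^{\alg{B}}(\varphi(a_1), \ldots, \varphi(a_K)) = \varphi(d^{\alg{A}}(a_1, \ldots, a_K))$ is $p_j(d^{\alg{A}}(a_1, \ldots, a_K))$, and a final application of \eqref{eq:decomp} with row $j$ equal to $(a_1, \ldots, a_K)$ and all other rows constantly $c$ collapses this to $p_j(a_j) = a_j^j$, which is exactly the diagonal entry claimed. I remark that neither the maximality of $K$ nor strong abelianness is strictly required for this bijection-and-transport argument; they enter only through the previous proposition, which produces such a $d$ and (via strong abelianness) ensures each $p_i$ is nonconstant, equivalently $|A_i| > 1$.
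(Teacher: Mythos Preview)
The paper does not supply its own proof of this proposition; it is quoted from \cite[Lemma 11.4]{MV1989} and simply used. Your argument is correct and is essentially the standard one: define the coordinate projections $p_i(x)=d(c,\ldots,x,\ldots,c)$, use the decomposition identity \eqref{eq:decomp} together with idempotence to show that $\varphi=(p_1,\ldots,p_K)$ and $d$ are mutually inverse between $A$ and $A_1\times\cdots\times A_K$, and then transport structure. Your closing observation is also accurate: the bijection and the diagonal description of $d$ follow from the decomposition axioms alone; strong abelianness and maximality of $K$ are not needed here, only for the surrounding development.
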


In \cite[Theorem 11.9]{MV1989}, McKenzie and Valeriote showed that

\begin{theorem}\label{thm:MV11.9}
  If \( \alg{A} \) is strongly abelian and \(K\) the largest arity of a decomposition term over \( \alg{A} \), then any other term's depending on more than \(K\) variables implies that \( \Th{\var{V}} \) and \( \Th{\var{V}_\fin} \) are undecidable for any variety \( \var{V} \) containing \( \alg{A} \).
\end{theorem}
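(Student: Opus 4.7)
The strategy is to establish undecidability by semantically interpreting an undecidable first-order theory -- that of finite directed graphs -- into $\Th{\var{V}_\fin}$ and analogously into $\Th{\var{V}}$. The surplus essential arity of $t$ beyond $K$ should supply the combinatorial slack needed to encode arbitrary graph adjacencies via terms.

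First, by the preceding Proposition 11.4, I coordinatize $\alg{A}$ as the product $A_1 \times \cdots \times A_K$ with the maximum-arity decomposition term $d$ acting as the diagonal selector. Given the hypothesized $n$-ary term $t$ with $n > K$ depending essentially on all its inputs, I analyze $t$'s action in the product representation. The diagonal map $x \mapsto t(x,x,\ldots,x)$ cannot be a permutation of $A$, for otherwise the remark following Proposition 11.3 would yield a decomposition term of arity $n > K$, contradicting the choice of $K$. Each of the $n$ essential input positions of $t$ must contribute to at least one of the $K$ output coordinates of the product; pigeonhole then guarantees an output coordinate $j$ on which $t$ depends essentially in at least two distinct input positions $i_1, i_2$. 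Freezing the remaining inputs to judiciously chosen constants isolates a polynomial $g(x,y)$ on a subset of $A_j$ that depends essentially on both of its variables.

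Second, I would use $g$ (together with $d$ and the ambient operations) to build, uniformly in a finite directed graph $G=(V,E)$, an algebra $\alg{B}_G \in \HSP{\alg{A}} \subseteq \var{V}$ in whose first-order language the vertex set and adjacency relation of $G$ are definable. The intended skeleton of such an encoding -- elements coding vertices and edges, $d$ projecting to vertex data, $g$ testing adjacency -- follows the template of \cite[Chapter 11]{MV1989}. Uniformity is the crucial feature: to each graph sentence $\varphi$ one assigns a sentence $\varphi^*$ in the language of $\var{V}$ such that $\alg{B}_G \models \varphi^*$ iff $G \models \varphi$. Combined with Trakhtenbrot's theorem on the undecidability of the first-order theory of finite graphs, this forces $\Th{\var{V}_\fin}$ to be undecidable; extending the same encoding to countable graphs, or invoking an ultraproduct of the $\alg{B}_G$'s, dispatches $\Th{\var{V}}$ as well.

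The main obstacle I anticipate is the second step: in a strongly abelian setting the available operations are highly rigid, so one must certify that the extracted $g$ really does distinguish enough elements to carry adjacency information, rather than collapsing to a projection or a near-projection on the relevant fiber. The non-permutation of the diagonal and the careful choice of frozen constants must be leveraged against the coordinatization to guarantee that $\alg{B}_G$ and $\alg{B}_{G'}$ are first-order distinguishable whenever $G$ and $G'$ are non-isomorphic, and that the translation $\varphi \mapsto \varphi^*$ is genuinely recursive. This is precisely where the detailed coordinatization machinery of \cite[Chapter 11]{MV1989} -- the interplay between decomposition terms and non-decomposition terms on the product representation -- must do the heavy lifting.
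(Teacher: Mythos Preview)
The paper does not supply its own proof of this theorem; it is quoted verbatim as \cite[Theorem 11.9]{MV1989} and then generalized in Section~\ref{sec:main interpretation}. So the relevant comparison is between your sketch and the paper's proof of the generalization (Theorem~\ref{thm:bipartite into frzflt}), which follows the same architecture as the original McKenzie--Valeriote argument.

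Your high-level plan---extract a genuinely binary operation from the arity surplus and use it to encode graphs in subpowers---is correct, and your pigeonhole step is essentially the content of the paper's Proposition~5.2. But there is a real gap at the hinge between your first and second steps. Merely obtaining a polynomial $g(x,y)$ that depends on both variables is not enough to run the encoding; the crucial property one needs is that $g$ be \emph{not left-invertible} at either input (Lemma~\ref{lemma:essentially binary and not left invertible} in the paper). Without this, the four elements $a_i * b_j$ cannot be separated by the definable relation $\propto$ (agreement under all non-left-invertible unary polynomials), and the edge relation collapses. Your remark that the diagonal $x \mapsto t(x,\ldots,x)$ fails to be a permutation is true but not the operative fact; what drives the argument is Lemma~\ref{lemma:leftinversesunary}, that right-invertible terms in a strongly abelian algebra are automatically essentially unary, which is what lets one trade away left-invertibility while retaining dependence on two variables.

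Your second step is, as you acknowledge, only a pointer to \cite{MV1989}. The paper's construction is rather specific: one works in a free algebra $\alg{F}$, adjoins a fresh generator $z$, and quotients by the congruence generated by identifying $t(f_0,\vec{u})$ with $t(z,\vec{u})$ for every non-left-invertible $t$ (Construction~\ref{const:F(X)/theta}); this forces $0 \propto z$ while keeping $\{0,1,2,3\}$ pairwise $\propto$-inequivalent (Claim~\ref{claim:0123 not propto}). The graph $\mathbb{G}$ is then encoded as a generated subpower of $\alg{C}^\Gamma$ with characteristic-function generators, and one recovers vertices and edges by isolating $\leq$-maximal $\sim$-classes inside the definable set $\NRINV$. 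None of this machinery---the $\propto$ relation, the role of non-invertibility, the quotient construction, the $\NRINV/\GEN$ hierarchy---appears in your sketch, and it is precisely where the ``rigidity'' obstacle you flag gets resolved. The paper also interprets bipartite graphs rather than arbitrary directed graphs, which streamlines the definability of the two vertex colours via the $\EDGE$ relation.
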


Our goal is to generalize this result to algebras \( \alg{A} \) which are not themselves strongly abelian, but do contain nontrivial strongly abelian congruences. 

\begin{proposition}\label{prop:SA induced algebra}
  Let \( \alg{A} \) be a finite algebra with a strongly abelian congruence \( \tau \). Let \(C \subset A \) be any \( \tau \)-class; then the non-indexed algebra
  \[ \alg{A}_{|C} = \langle C; \; \{f_{|C} \colon f \in \Pol{}{\alg{A}},\, f(C,C,\dots,C) \subseteq C \} \rangle \]is strongly abelian.
\end{proposition}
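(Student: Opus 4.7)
The plan is to reduce the strong term condition for $\alg{A}_{|C}$ to the strongly abelian condition on $\tau$ in $\alg{A}$. First, I would observe that every polynomial operation of the non-indexed algebra $\alg{A}_{|C}$ is itself the restriction to $C$ of some $C$-preserving polynomial of $\alg{A}$. This holds by a short induction on the construction of polynomials: the basic operations of $\alg{A}_{|C}$ are by definition of this form, and both the composition of two $C$-preserving polynomials of $\alg{A}$ and the substitution of a constant from $C$ into a $C$-preserving polynomial of $\alg{A}$ again yield a $C$-preserving polynomial of $\alg{A}$.

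Given that, the verification is almost immediate. To show $\alg{A}_{|C}$ is strongly abelian I fix an arbitrary polynomial $g$ of $\alg{A}_{|C}$, write $g = f_{|C^{n}}$ for some $f \in \Pol{}{\alg{A}}$ with $f(C,\ldots,C) \subseteq C$, and suppose tuples $\vec{u},\vec{v},\vec{w}$ from $C^{k}$ and $\vec{a},\vec{b}$ from $C^{n-k}$ satisfy $g(\vec{u},\vec{a}) = g(\vec{v},\vec{b})$. Because $C$ is a single $\tau$-class, every pair of elements of $C$ is $\tau$-related, so in particular $\vec{u}\mathrel{\tau}\vec{v}\mathrel{\tau}\vec{w}$ and $\vec{a}\mathrel{\tau}\vec{b}$. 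These are precisely the hypotheses of the strong term condition $\mathrm{SC}(\tau,\tau;0_{\alg{A}})$ witnessing that $\tau$ is strongly abelian in $\alg{A}$, applied to the polynomial $f$; its conclusion gives $f(\vec{w},\vec{a}) = f(\vec{w},\vec{b})$, i.e., $g(\vec{w},\vec{a}) = g(\vec{w},\vec{b})$.

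The main obstacle is essentially notational rather than mathematical. No combinatorial content is needed, because the hypothesis that $C$ is a single $\tau$-class ensures that any tuple of elements of $C$ automatically satisfies every $\tau$-hypothesis the strong term condition requires; the entire substance of the proof lies in recognizing that the polynomial clone of $\alg{A}_{|C}$ embeds into $\Pol{}{\alg{A}}$ by taking a chosen $C$-preserving extension of each operation, and that the strongly abelian condition transfers under this embedding because it is a universal Horn sentence in the polynomial operations.
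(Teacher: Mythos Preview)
Your argument is correct; the paper states this proposition without proof, treating it as a routine observation, and what you have written is exactly the standard verification one would supply. The only remark worth making is that your inductive step is slightly more than is needed: since the basic operations of \( \alg{A}_{|C} \) already comprise \emph{all} \(C\)-preserving polynomials of \( \alg{A} \) restricted to \(C\), and this set is closed under composition and substitution of constants from \(C\), the polynomial clone of \( \alg{A}_{|C} \) is literally the same set of functions as its set of basic operations, so no induction is required.
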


It would be natural to search for a generalization of Theorem \ref{thm:MV11.9} by looking at polynomials which restrict to decomposition operations on \( \tau \)-classes; however, we found this approach to have attendant difficulties.

Instead, we make the following definition:

\begin{definition}Let \( \alg{A} \) be a finite algebra with a congruence \( \tau \) as above. Suppose we have a term \(t(v_1, \ldots, v_n, v_{n+1}, \ldots, v_{n+k}) \) of \( \alg{A} \) and \(\tau\)-classes such that the action
  \[ C_1 \times \cdots \times C_n \times C_{n+1} \times \cdots \times C_{n+k} \stackrel{t}{\rightarrow} C_0 \]does not depend on the variables \( n+1 \) through \( n + k \). We call the map
  \begin{align*} f \colon C_1 \times \cdots C_n &\rightarrow C_0 \\
    \vec{x} &\mapsto t(\vec{x}, \vec{a}) \end{align*}(\( \vec{a} \) any arbitrary tuple from \( C_{n+1} \times \cdots \times C_{n+k} \)) a \emph{\(\tau\)-boxmap}.
\end{definition}

The remainder of the paper is devoted to proving the following theorem:

\begin{theorem}\label{thm:main}
  Let \( \tau \) be the strongly solvable radical of the finite algebra \( \alg{A} \). Fix any term \(t(v_1, \ldots, v_n) \) and let \( C_0, C_1, \ldots, C_n \) be \(\tau\)-classes such that 
  \[ C_1 \times \cdots \times C_n \stackrel{t}{\rightarrow} C_0 \]Let \(K\) be the maximum arity of a decomposition \(\tau\)-boxmap on \(C_0\).

  Then if the action of \(t\) on \(C_1 \times \cdots \times C_n \) depends on more than \(K\) factors, it follows that \( \HSP{\alg{A}} \) is hereditarily finitely undecidable.
\end{theorem}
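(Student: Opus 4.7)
The plan is to lift the McKenzie--Valeriote theorem (Theorem \ref{thm:MV11.9}) from strongly abelian algebras to the strongly-solvable-radical case by passing through the multi-sorted constructions $\frz{L}{\tau}$ and $\frzflt{L}{\tau}$ promised in Sections \ref{sec:frz}--\ref{sec:frzflt}. First I would package $\alg{A}$ as a multi-sorted structure whose sorts are indexed by the $\tau$-classes of $\alg{A}$ and whose basic operations are exactly the $\tau$-boxmaps; by Proposition \ref{prop:SA induced algebra} applied one sort at a time, together with the cross-class fibrations supplied by general $\tau$-boxmaps, this multi-sorted structure is strongly abelian in the appropriate many-sorted sense.

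Inside this multi-sorted structure, the hypothesis provides a $\tau$-boxmap $f \colon C_1 \times \cdots \times C_n \to C_0$ that depends essentially on more than $K$ coordinates, where $K$ is the width of the widest decomposition boxmap into $C_0$. I would then run a multi-sorted analogue of the MV graph-interpretation against this $f$: the mismatch between $f$'s essential arity and $C_0$'s decomposition width forces interference patterns among its inputs, which can be used to encode arbitrary finite coloured bipartite graphs. The preamble's gadget vocabulary $\VERTEXGEN$, $\EDGEGEN$, $\VERTEXRED$, $\VERTEXBLUE$ strongly suggests that the target theory is a coloured-graph theory known to be hereditarily finitely undecidable, interpreted into finite members of $\HSP{\alg{A}^{\tau\flat}}$.

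Finally, every sort $C_i$, every $\tau$-boxmap, and hence every graph-gadget built in the previous step is first-order definable in the original single-sorted language of $\alg{A}$: the sort $C_i$ as ``elements $\tau$-related to a chosen parameter $c_i$'', the $\tau$-boxmaps as appropriately frozen term operations of $\alg{A}$. Consequently every finite multi-sorted model of the interpreted graph theory is recoverable inside a sufficiently large finite subalgebra of a power of $\alg{A}$, and the hereditary finite undecidability of the coloured-graph theory transfers to $\Th{\HSP{\alg{A}}_{\fin}}$ and, via the very same interpretation, to every subvariety containing $\alg{A}$.

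The main obstacle will be the middle step. MV's argument is written for a single-sorted strongly abelian algebra with a single decomposition term of fixed arity, and must be reworked to accommodate inputs of $f$ coming from possibly distinct sorts $C_i$ while its output lives in the single distinguished sort $C_0$; in particular one must rule out the possibility that the apparent $(K{+}1)$-st essential variable of $f$ is being absorbed by a cross-sort re-coordinatization of the inputs rather than producing genuine interference at $C_0$. The flat passage to $\frzflt{L}{\tau}$, as opposed to merely $\frz{L}{\tau}$, is presumably introduced precisely to enforce uniform coordinate handling across all sorts simultaneously and to make the Idziak-style combinatorial analysis go through in this heterogeneous setting.
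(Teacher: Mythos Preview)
Your proposal is correct and takes essentially the same route as the paper: interpret bipartite graphs into $\HSP{\frzflt{\alg{A}}{\tau}}$ (this is Theorem~\ref{thm:bipartite into frzflt}), then transfer hereditary finite undecidability back to $\HSP{\alg{A}}$ via the semantic interpretation of Lemma~\ref{lemma:frzflt into A}. Your identification of the middle step as the main obstacle is accurate; the paper devotes most of Section~\ref{sec:main interpretation} to it, using machinery (a left-invertibility analysis in Lemmas~\ref{lemma:leftinversesunary}--\ref{lemma:essentially binary and not left invertible}, a carefully constructed congruence on a free algebra in Construction~\ref{const:F(X)/theta}, and the definable relation $\propto$) that is not visible from your outline but is consistent with it.
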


In Section \ref{sec:main interpretation}, we will need the following definitions:

\begin{definition}Let \( \alg{A} \) be any algebra.
  \begin{enumerate}
  \item We say that a term \( t(v_1, \ldots, v_n) \) is \emph{left-invertible} at \(v_i\) if there exists a term \(r(v_0, v_{n+1}, \ldots, v_{n+k}) \) such that
    \[ \alg{A} \models v_i = r(t(v_1, \ldots, v_n), v_{n+1},\ldots,v_k) \]
  \item Likewise we call \(t(v_1, \ldots, v_n) \)  \emph{right-invertible} if there exist terms \newline\(s_i(v_0, \ldots, v_\ell)\), \( 1 \leq i \leq n\), such that
    \[ \alg{A} \models t(s_1(v_0,\ldots,v_\ell), \ldots, s_n(v_0,\ldots,v_\ell)) = v_0 \]
  \end{enumerate}
\end{definition}

\begin{notation}
  Let \( \alg{A} \) be a structure and \(I\) a (large) index set. We will use a bold \( \point{x} \) to denote a member of \( \alg{A}^I \), and call such elements ``points''. The value of \( \point{x} \) at the \(i^{\text{th}}\) coordinate will be \( x^i \), and we will write
  \[ \point{x} = x_{|I_0} \oplus y_{|I_1} \oplus \cdots \]to express that \(I_0 \) is the subset of coordinates \(i\) where \(x^i = x \), \(I_1\) the subset where \( x^i = y \), etc. If \( I_0 \) is a singleton we will write
  \[ \point{x} = x_{|i} \oplus \cdots \]instead of
  \[ \point{x} = x_{|\{i\}} \oplus \cdots. \]
\end{notation}

In this paper, the proof of a theorem, lemma, etc.~will be marked as usual with\hspace{\stretch{1}}\(\qed\) 

The proof of a claim within a larger proof will be marked with a turnstile indicating the claim number, as follows:\pfend{0.1}

\section{The language \(\frz{L}{\tau}\)}\label{sec:frz}

We will be building two multi-sorted languages from which to effect an interpretation. While it is possible to formalize multi-sorted model theory entirely in a usual first-order setting, this formalization takes away much of the naturality of the multi-sorted definition. In particular, the first-order formalization ``gets wrong'' the structural operations of direct product and substructure; these are key for us, since we will be constructing varieties in our sorted model classes.

\begin{notation}
  Every atomic formula \( \Phi(v_1, v_2, \ldots) \) of a multi-sorted language must implicitly or explicitly determine what sort each variable must be assigned from. We call this the \emph{type signature} of the formula. In particular, for a term \(t\) we write
  \[ t(S_1, S_2,\ldots) \rightarrow S_0 \]to denote that the formula
  \[ t(x_1, x_2, \ldots) = x_0 \]is meaningful only if \(x_0 \in S_0\), \(x_1 \in S_1 \), \(x_2 \in S_2 \), and so forth.
\end{notation}

For the remainder of this and the next section, fix a finite algebraic language \(L\) and a finite \(L\)-algebra \( \alg{A} \) with a congruence \( \tau \) whose congruence classes are \( C_1, \ldots, C_M \).

\begin{definition}
  The multi-sorted first-order language \( \frz{L}{\tau} \) will have the following nonlogical symbols:

  For each \(1 \leq i \leq M \), the language will have a sort symbol \( \inangle{i} \).

  For each basic operation symbol \(f(v_1, \ldots, v_n) \) of \(L\) and each \( 1 \leq i_1, \ldots, i_n \leq M \), \( \frz{L}{\tau} \) will have a basic operations symbol \(f_{i_1 \cdots i_n} \) of type signature
  \[ f_{i_1 \cdots i_n} \left(\inangle{i_1}, \inangle{i_2}, \ldots, \inangle{i_n} \right) \rightarrow \inangle{i_0} \]where
  \[ C_{i_1} \times \cdots \times C_{i_n} \stackrel{t^{\alg{A}}}{\rightarrow} C_{i_0}. \]
\end{definition}

\begin{construction}
  \begin{enumerate}[ref=\thetheorem(\arabic*)]
  \item\label{conststep:Atau} We define an \( \frz{L}{\tau} \)-structure \( \frz{\alg{A}}{\tau} \) in the natural way: each sort
    \[ \inangle{i}^{\frz{\alg{A}}{\tau}} = C_i \]and if \(x_k \in C_{i_k}\) for \(1 \leq k \leq n\),
    \[ f^{\frz{\alg{A}}{\tau}}_{i_1 \cdots i_n}(x_1, \ldots, x_n) = f^{\alg{A}}(x_1, \ldots, x_n). \]

  \item\label{conststep:Btau}
    More generally, let \( \alg{B} \) be any \(L\)-structure such with a congruence \( \tau^{\alg{B}} \) such that there exists an isomorphism \( \varphi \colon \alg{A}/\tau \rightarrow \alg{B}/ {\tau^{\alg{B}}} \). Define an \( \frz{L}{\tau} \)-structure \( \frz{\alg{B}}{\tau} \) by declaring
    \[ \inangle{i}^{\frz{\alg{B}}{\tau}} = \varphi(C_i) \]
    and defining the basic operations
    \[ f^{\frz{\alg{B}}{\tau}}_{i_1 \cdots i_n}(x_1, \ldots, x_n) = f^{\alg{B}}(x_1, \ldots, x_n) \]for any \(x_k \in \varphi(C_{i_k})\). Note that the isomorphism \( \varphi \) will usually be clear in practice, so we do not include it as a visible parameter in the symbol \( \frz{\alg{B}}{\tau} \). Similarly, we will usually refer to the distinguished congruence of \( \alg{B} \) as \(\tau\) rather than \( \tau^{\alg{B}} \).
  \end{enumerate}
\end{construction}

The following proposition connecting the structural operations in \(\frz{L}{\tau}\) with those in \(L\) is easy to prove:

\begin{proposition}\label{prop:frzHSP}
  Let \( \alg{M} = \frz{\alg{B}}{\tau} \) and \( \alg{N} = \frz{\alg{C}}{\tau} \). \begin{enumerate}
  \item Let \( \alg{D} \leq \alg{B} \) have nonempty intersection with each \( \tau \)-class; then \( \alg{D} \) satisfies the hypotheses of Construction \ref{conststep:Btau}, and \( \frz{\alg{D}}{\tau} \) is a substructure of \( \alg{M} \). Moreover, every substructure of \( \alg{M} \) is obtained in this way.

  \item Let \( \theta \leq \tau \) be a congruence on \( \alg{B} \); then \( \alg{B}/\theta \) satisfies the hypotheses of Construction \ref{conststep:Btau}, and \( \frz{\left( \alg{B}/\theta \right)}{\tau} \) is a homomorphic image of \( \alg{M} \). Moreover, every homomorphic image of \( \alg{M} \) is obtained in this way.

  \item Let \( \alg{D} \leq \alg{B} \times \alg{C} \) be the subalgebra consisting of all pairs \( \vpair{b}{c} \) such that \( \varphi^{-1}(b/\tau) = \varphi^{-1}(c/\tau) \). Then \( \alg{D} \) satisfies the hypotheses of Construction \ref{conststep:Btau}, and \( \frz{\alg{D}}{\tau} \) is the product of \( \alg{M} \) and \( \alg{N} \) in the sense of \( \frz{L}{\tau} \). (This generalizes to any number of factors.)
  \end{enumerate}
\end{proposition}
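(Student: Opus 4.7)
The proposition is really three separate correspondences (substructures, quotients, products), and in each case my plan is to extract from a multi-sorted object the appropriate $L$-algebraic object by taking a union of sorts, verify the relevant closure properties in both directions, and observe that the Construction~\ref{conststep:Btau} hypotheses are satisfied so the freezing is well-defined. The key uniform observation is that by definition, for any sort-compatible tuple $x_1, \ldots, x_n$ with $x_k \in \varphi(C_{i_k})$, we have
\[ f^{\frz{\alg{B}}{\tau}}_{i_1 \cdots i_n}(x_1, \ldots, x_n) = f^{\alg{B}}(x_1, \ldots, x_n), \]
so ``closed under the sorted operations'' and ``closed under the $L$-operations, in a sort-respecting way'' are literally the same condition.

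For part (1), given any substructure $\alg{M}' \leq \alg{M}$, the set $D := \bigcup_i \inangle{i}^{\alg{M}'}$ is a subset of $B$ that meets every $\tau$-class (since each sort is nonempty by convention) and is closed under every basic operation of $\alg{B}$ by the displayed identity above, hence is a subalgebra of $\alg{B}$. The induced congruence $\tau^{\alg{D}}$ is just $\tau \cap D^2$, and its quotient $\alg{D}/\tau^{\alg{D}}$ is isomorphic to $\alg{A}/\tau$ via $\varphi^{-1}$ restricted. Conversely, any $\alg{D} \leq \alg{B}$ meeting every $\tau$-class yields the sorts $\inangle{i}^{\frz{\alg{D}}{\tau}} = D \cap \varphi(C_i)$, which are closed under each $f_{i_1 \cdots i_n}$ by the same identity, giving a substructure of $\alg{M}$.

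For part (2), the congruences of the multi-sorted algebra $\alg{M}$ are exactly the families $(\theta_i)_{i \le M}$ of equivalence relations on the individual sorts which are respected by every sorted operation $f_{i_1 \cdots i_n}$. Given such a family, define $\theta$ on $B$ by gluing: $(a,b) \in \theta$ iff $a,b$ lie in a common $\varphi(C_i)$ and $(a,b) \in \theta_i$. Compatibility with each $f \in L$ follows by splitting cases on the sorts of the arguments and applying the corresponding $\theta_i$. This $\theta$ is automatically below $\tau$ since $\theta$-classes are confined to single $\tau$-classes. The converse is a direct restriction. Finally, observe that $\alg{B}/\theta$ has $(\alg{B}/\theta)/(\tau/\theta) \cong \alg{B}/\tau \cong \alg{A}/\tau$, so Construction~\ref{conststep:Btau} applies and $\frz{(\alg{B}/\theta)}{\tau}$ is manifestly the image of $\alg{M}$ under the family of quotient maps.

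For part (3), unwrap the definition of product in multi-sorted model theory: the product of $\alg{M}$ and $\alg{N}$ has sort $\inangle{i}$ equal to $\inangle{i}^{\alg{M}} \times \inangle{i}^{\alg{N}} = \varphi_{\alg{B}}(C_i) \times \varphi_{\alg{C}}(C_i)$, with operations acting coordinatewise. Gluing these sorts gives precisely the fiber product $\alg{D}$, and coordinatewise operations in the multi-sorted sense coincide with the restriction of the coordinatewise operations of $\alg{B} \times \alg{C}$. The kernel of the projection $\alg{D} \twoheadrightarrow \alg{A}/\tau$ sending $\vpair{b}{c}$ to $\varphi_{\alg{B}}^{-1}(b/\tau)$ is exactly $\tau^{\alg{D}}$, so Construction~\ref{conststep:Btau} again applies and $\frz{\alg{D}}{\tau}$ equals the multi-sorted product on the nose. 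The generalization to arbitrarily many factors is a routine induction. The only mildly delicate point across all three parts is bookkeeping the isomorphisms $\varphi$ in Construction~\ref{conststep:Btau} so that ``the same $\tau$-class'' makes sense for elements of different algebras; but once those are fixed, everything reduces to the tautological identification of sorted with unsorted operations.
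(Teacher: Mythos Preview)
Your proof is correct and is exactly the routine verification the paper has in mind; the paper itself does not give a proof, merely remarking that the proposition ``is easy to prove,'' and your argument fills in precisely the bookkeeping one would expect (union of sorts for substructures, gluing sortwise congruences for quotients, fiber product for products).
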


The classical proof that a class is equationally axiomatizable iff it is closed under taking products, substructures, and homomorphic images is valid for multi-sorted algebras, so it makes sense to talk about the variety \( \var{V}(\frz{\alg{A}}{\tau}) = \HSP{\frz{\alg{A}}{\tau}} \). A representation of the free algebras in this variety as subalgebras of a direct power of \( \frz{\alg{A}}{\tau} \), where the index set is itself a power of \( \frz{\alg{A}}{\tau} \), does exist; but is not straightforward to write down, and one is better off thinking of free algebras as algebras of terms. Note that the trivial algebra in this variety is the one where each sort is a singleton, i.e. \( \frz{\left(\alg{A}/\tau\right)}{\tau} \).

\begin{lemma}
  \begin{enumerate}[ref=(\arabic*)]
  \item\label{lemitem:tauabelian} The sorted structure \( \frz{\alg{B}}{\tau} \) is abelian (resp. strongly abelian) if and only if the congruence \( \tau \) was a (strongly) abelian congruence of \( \alg{B} \).
  \item\label{lemitem:Vabelian} If \( \alg{A} \) belongs to a finitely decidable variety and \( \tau \) is a (strongly) solvable congruence, then \( \HSP{\frz{\alg{A}}{\tau}} \) is a (strongly) abelian variety.
  \end{enumerate}
\end{lemma}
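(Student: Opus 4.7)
The plan for (1) is direct unpacking of definitions. A term of \(\frz{L}{\tau}\) with type signature \(t(\langle i_1\rangle,\ldots,\langle i_n\rangle) \to \langle i_0\rangle\) is, by induction on complexity, nothing but an \(L\)-term of \(\alg{B}\) whose inputs are constrained to \(C_{i_1}\times\cdots\times C_{i_n}\) (with output automatically in \(C_{i_0}\)). The top congruence \(\mathbf{1}\) of \(\frz{\alg{B}}{\tau}\) identifies arbitrary elements of each sort, i.e., arbitrary \(\tau\)-class members of \(\alg{B}\). Thus the term condition \([\mathbf{1},\mathbf{1}] = 0\) in \(\frz{\alg{B}}{\tau}\), quantified over all sorted terms, is literally the term condition \([\tau,\tau]^{\alg{B}} = 0\), and similarly for the strong commutator.

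For (2), I would first record that under the paper's standing hypotheses (\(\alg{A}\) in an FD variety), a (strongly) solvable congruence of any algebra in \(\HSP{\alg{A}}\) is (strongly) abelian---the strong case being Corollary \ref{cor:SS->SA} transferred to the FD setting via \cite{McSmed2013}, and the unstrong case a parallel result for FD varieties from the same literature. Hence \(\tau\) is (strongly) abelian in \(\alg{A}\), and by (1) the generator \(\frz{\alg{A}}{\tau}\) is (strongly) abelian. The subtlety is that (strong) abelianness of an algebra is a quasi-identity preserved by \(S\) and \(P\) but not by \(H\) in general, so one cannot conclude directly. The detour I propose is to place every algebra in the variety back in the form \(\frz{\alg{B}}{\tau}\) and check that the working \(\tau\) remains (strongly) abelian.

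Concretely, given \(\alg{M} \in \HSP{\frz{\alg{A}}{\tau}}\), the sorted Birkhoff theorem writes \(\alg{M}\) as a quotient of a subalgebra of some power \(\frz{\alg{A}}{\tau}^I\). Chasing through Proposition \ref{prop:frzHSP} yields \(\alg{M} = \frz{\alg{B}}{\tau}\) where \(\alg{B} = \alg{D}/\theta\), with \(\alg{D} \leq \alg{E}_I \leq \alg{A}^I\) (\(\alg{E}_I\) the diagonal subalgebra of tuples agreeing modulo \(\tau\)) and \(\theta \leq \tau^\alg{D}\). The (strong) abelianness of \(\tau\) is preserved at each of these three construction stages: forming the power (the term condition goes coordinatewise), restricting to a subalgebra (the term condition restricts), and quotienting by \(\theta \leq \tau^\alg{D}\) (using the standard commutator fact that \([\tau,\tau] = 0\) implies \([\tau/\theta,\tau/\theta] = 0\), with the analog for the strong commutator being equally routine). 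Thus \(\tau^\alg{B}\) is (strongly) abelian, and by (1) so is \(\alg{M} = \frz{\alg{B}}{\tau}\). The main obstacle is not conceptual but bookkeeping: transporting the distinguished congruence correctly through the three clauses of Proposition \ref{prop:frzHSP} and confirming that the commutator behaves as expected under each ordinary-algebra operation.
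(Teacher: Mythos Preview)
Your outline for (1), and the overall architecture for (2)---represent an arbitrary \(\alg{M} \in \HSP{\frz{\alg{A}}{\tau}}\) back in the form \(\frz{\alg{B}}{\tau}\) via Proposition~\ref{prop:frzHSP} and then track (strong) abelianness of the distinguished \(\tau\) through \(P\), \(S\), \(H\)---match the paper exactly. The product and subalgebra steps go through as you say.

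The gap is the quotient step. You invoke as a ``standard commutator fact'' that \([\tau,\tau]=0\) implies \([\tau/\theta,\tau/\theta]=0\), but outside congruence-modular varieties this fails in general. The condition \([\tau/\theta,\tau/\theta]=0\) in \(\alg{B}/\theta\) unpacks to \(\TC{\tau}{\tau}{\theta}\) in \(\alg{B}\), and the term-condition centralizer relation, while closed under meets in its third argument, is \emph{not} upward closed there; so \(\TC{\tau}{\tau}{0}\) does not yield \(\TC{\tau}{\tau}{\theta}\) for free. The strong term condition has the same defect, so ``the analog for the strong commutator'' is not routine either.

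The paper patches this precisely with the FD hypothesis you recorded at the start but never deployed at this step. It first reduces to \(\alg{M}\) finite (hence \(\alg{B}\) finite). From \(\tau\) (strongly) abelian in \(\alg{B}\) one obtains only that the interval \([\theta,\tau]\) is (strongly) \emph{solvable}---in a finite algebra this is a tame-congruence-theoretic type condition and therefore passes to subintervals---so \(\tau/\theta\) is (strongly) solvable in \(\alg{B}/\theta \in \HSP{\alg{A}}\). Now the standing hypothesis ``(strongly) solvable \(\Rightarrow\) (strongly) abelian in \(\HSP{\alg{A}}\)'' gives \(\TC{\tau}{\tau}{\theta}\), and part (1) finishes. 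Once you replace the claimed commutator fact by this solvability-then-FD step, your proof is correct and essentially identical to the paper's.
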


\begin{proof}
  \ref{lemitem:tauabelian}: A failure of the (strong) term condition \( \TC{\tau}{\tau}{\bot} \) in \( \alg{B} \) is readily convertible into a failure of the corresponding condition \( \TC{\top}{\top}{\bot} \) in \( \frz{\alg{B}}{\tau} \), and vice versa.

  \ref{lemitem:Vabelian}: Recall our assumption that in \( \HSP{\alg{A}} \), strongly solvable congruences are strongly abelian.

  If \( \HSP{\frz{\alg{A}}{\tau}} \) were to fail to be (strongly) abelian, this failure would be witnessed in a finitely generated, and hence finite, structure \( \alg{M} \). We may suppose \( \alg{M} = \alg{N}/\vartheta \), where \( \alg{N} \) is a substructure of a direct power \( \left(\frz{\alg{A}}{\tau} \right)^X \).

  As we saw in Lemma \ref{prop:frzHSP}, this direct power is the image under \( \frz{\bullet}{\tau} \) of the subalgebra \( \alg{P} \) of \( \alg{A}^X \) consisting of all \( \tau \)-constant tuples. Since any failure of (strong) abelianness would project to a failure at some coordinate, \[ \tau^{\alg{P}} = \tau^X \cap (P \times P) \]is (strongly) abelian. Hence \( \left(\frz{\alg{A}}{\tau} \right)^X \) is (strongly) abelian.

  We know that \( \alg{N} = \frz{\alg{B}}{\tau} \) for some \( \alg{B} \leq \alg{P} \), and moreover that \[ \tau^{\alg{B}} = \tau^{\alg{P}} \cap (B \times B); \] it follows any failure of (strong) abelianness in \( \alg{B} \) would have represented one in \( \alg{P} \) already. Hence \( \alg{N} \) is (strongly) abelian.

  Finally, we have that there must exist \( \theta \in \Con{\alg{B}} \) such that \( \frz{ \left( \alg{B}/\theta \right)}{\tau} = \alg{N}/\vartheta = \alg{M} \). But since \( \tau \) is (strongly) abelian in \( \alg{B} \), \( \theta \) is (strongly) solvable, and hence (strongly) abelian as well; and just as in \ref{lemitem:tauabelian} any witness to the failure of the (strong) term condition \( \TC{\top}{\top}{\vartheta} \) in \( \alg{N} \) would give rise to a failure of the corresponding condition \( \TC{\tau}{\tau}{\theta} \) in \( \alg{B} \).
\end{proof}

\begin{corollary}
  If \( \alg{A} \) belongs to any finitely decidable variety and \( \tau \) is either the solvable radical or the strongly solvable radical of \( \alg{A} \), then \( \HSP{\frz{\alg{A}}{\tau}} \) semantically interprets into \( \HSP{\alg{A}} \).
\end{corollary}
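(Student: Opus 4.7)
The plan is to exhibit the interpretation explicitly, leveraging the sorted-to-unsorted correspondence already established in Proposition \ref{prop:frzHSP}. Fix a complete set of representatives \( a_1 \in C_1, \ldots, a_M \in C_M \) in \( \alg{A} \). I would translate each sentence \( \phi \) of \( \frz{L}{\tau} \) into an \( L \)-sentence \( \phi^*(\vec{p}) \) by relativizing each quantifier over \( \inangle{i} \) to the formula \( \delta(x, p_i) \), where \( p_1, \ldots, p_M \) are free parameters and \( \delta(x,y) \) is a uniform \( L \)-formula defining the (strongly) solvable radical in members of \( \HSP{\alg{A}} \). Each sorted basic operation \( f_{i_1 \cdots i_n} \) is interpreted by the ambient \( L \)-operation \( f \). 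Finally, I would prefix the result with a universal quantifier over \( \vec{p} \) relativized to an admissibility formula \( \Psi(\vec{p}) \) asserting that \( p_1/\tau, \ldots, p_M/\tau \) are pairwise distinct and that \( a_i \mapsto p_i/\tau \) extends to an isomorphism \( \alg{A}/\tau \to \alg{B}/\tau^{\alg{B}} \); since \( \alg{A}/\tau \) is finite, \( \Psi \) is a finite boolean combination of atomic formulas built from \( \delta \).

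Correctness then reduces to Proposition \ref{prop:frzHSP}: for any \( \alg{B} \in \HSP{\alg{A}} \) equipped with an admissible \( \vec{p} \), the \( \frz{L}{\tau} \)-structure produced by the interpretation is exactly \( \frz{\alg{B}}{\tau^{\alg{B}}} \), which lies in \( \HSP{\frz{\alg{A}}{\tau}} \). Conversely, every member of \( \HSP{\frz{\alg{A}}{\tau}} \) arises as some such \( \frz{\alg{B}}{\tau^{\alg{B}}} \) by the characterization of substructures, homomorphic images, and products in \( \frz{L}{\tau} \) given there. Hence \( \HSP{\frz{\alg{A}}{\tau}} \models \phi \) iff \( \HSP{\alg{A}} \models \forall \vec{p}\, (\Psi(\vec{p}) \rightarrow \phi^*(\vec{p})) \), which is the required semantic interpretation.

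The main obstacle is uniform first-order definability of the (strongly) solvable radical across all members of \( \HSP{\alg{A}} \), i.e., producing the formula \( \delta(x,y) \). For each individual finite \( \alg{B} \), definability with parameters is immediate, but uniformity across the variety is a nontrivial consequence of the tame-congruence-theoretic analysis of finitely decidable varieties (cf.\ \cite{HM1988,VW1992}), combined with the hypothesis embedded in the corollary's setup that strongly solvable congruences in \( \HSP{\alg{A}} \) are strongly abelian. Once \( \delta \) is in hand, the remaining verifications — that the translation is computable and preserves logical structure, and that the admissibility formula \( \Psi \) is correct — are routine syntactic manipulations that follow the blueprint of Construction \ref{conststep:Btau}.
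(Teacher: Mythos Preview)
Your approach is essentially the same as the paper's: both hinge on the uniform first-order definability of the radical (which the paper attributes specifically to \cite{McSmed2013} rather than \cite{HM1988,VW1992}), using constants or parameters to mark the $\tau$-classes and translating each sorted operation $f_{i_1\cdots i_n}$ back to the ambient $L$-operation $f$. The paper is terser---it simply introduces new constant symbols $c_1,\ldots,c_M$ in place of your explicit admissibility formula $\Psi$---and closes by noting that finite axiomatizability of $\HSP{\frz{\alg{A}}{\tau}}$ (automatic in the strongly abelian case) is what makes the undecidability transfer go through.
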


\begin{proof}
  The key observation is that each of the congruences in the statement of the theorem is uniformly definable in \( \HSP{\alg{A}} \) (this is proved in \cite{McSmed2013}), and our construction guarantees that \( \tau^{\alg{B}} \) is the (strongly) solvable radical of \( \alg{B} \) whenever \( \tau \) was of \( \alg{A} \).

  Let \( c_1, \ldots, c_M \) be new constant symbols. Take any \( \alg{M} = \frz{\alg{B}}{\tau} \in \HSP{\frz{\alg{A}}{\tau}} \), where \( \alg{M} \) and \( \alg{B} \) can be taken to be on the same underlying set. First, assign \(c_i\) to an arbitrary element of \( \varphi(C_i)\) for each \(i\). Then one can recover the sort of \(x\) by asserting that \(x \) and \( c_i\) are congruent modulo the radical; likewise the assertion \( f_{i_1 \cdots i_n} (x_1, \ldots, x_n) = x_0 \) is true in \( \alg{M} \) iff each \( x_k \equiv_{\Rad{\alg{B}}} c_{i_k} \) and \(f(x_1, \ldots, x_n) = x_0 \) in \( \alg{B} \).

  It follows that whenever \( \HSP{\frz{\alg{A}}{\tau}} \) is (finitely) undecidable and finitely axiomatizable (which happens, for instance, when the variety is strongly abelian), then \( \HSP{\alg{A}} \) is (finitely) undecidable too.
\end{proof}

\section{The language \( \frzflt{\alg{A}}{\tau} \)}\label{sec:frzflt}

The construction in the previous section required no assumptions about \( \tau \). If, however, \( \tau \) is strongly abelian, then we can introduce a further sorted construction, generalizing that effected by McKenzie and Valeriote in \cite[Chapter 11]{MV1989}. For the remainder of this section, we add the assumption that \( \tau \) is strongly abelian.

Recall (Proposition \ref{prop:SA induced algebra}) that each induced algebra
\[ \alg{A}_{|{C_i}} = \inangle{ C_i \; ; \; \left\{ f \in \Pol{}{\alg{A}} \colon f(C_i, \ldots, C_i) \subseteq C_i \right\} } \]is a strongly abelian algebra. For each \(1 \leq i \leq M\), let \( K_i\) be the greatest arity of a decomposition \( \tau \)-boxmap on \( C_i \). Fix operators
\[ d_i(v_1, \ldots, v_{K_i}) = D_i(v_1, \ldots, v_n, \vec{a} ) \]witnessing this; that is, \( d_i \) is a \(K_i\)-ary decomposition operator on \( C_i \) and \(D_i(\vec{x}, \vec{a}) = D_i(\vec{x}, \vec{a}') \) whenever \( \vec{x} \in C_i \) and \( \vec{a} \equiv_\tau \vec{a}' \). This determines a product decomposition
\[ C_i = C_{i,1} \times \cdots \times C_{i,K_i} \]as detailed above.

\begin{definition}
  The multi-sorted first-order language \( \frzflt{L}{\tau} \) will have the following nonlogical symbols:

  For each \( 1 \leq i \leq M \) and each \( 1 \leq j \leq K_i \), the language will have a sort symbol \( \inangle{i,j} \).

  For each \( \tau \)-boxmap 
  \[ f(v_1, \ldots, v_n) = t(v_1, \ldots, v_n, \vec{a}) \colon C_{i_1} \times \cdots C_{i_n} \rightarrow C_{i_0} \](\( \vec{a} \in C_{i_{n+1}} \times \cdots \times C_{i_{n'}} \)) and each \(1 \leq j \leq K_{i_0} \) the language \( \frzflt{L}{\tau} \) will have a basic operation of type declaration
  \[ t^j_{i_1 \cdots i_n i_{n+1} \cdots i_{n'}} \begin{pmatrix} 
    \inangle{i_1,1} & \inangle{i_2,1} & \cdots & \inangle{i_n,1} \\
    \inangle{i_1,2} & \inangle{i_2,2} & \cdots & \inangle{i_n,2} \\
    \vdots & \vdots & \ddots & \vdots \\
    \inangle{i_1,K_{i_1}} & \inangle{i_2,K_{i_2}} & \cdots & \inangle{i_n,K_{i_n}}
  \end{pmatrix} \rightarrow \inangle{i_0,j}. \]
\end{definition}

Note that every term \(t(v_1, \ldots, v_n)\) of \( \alg{A} \) is automatically a \( \tau\)-boxmap when restricted to any product of \(n\) \(\tau\)-classes, so the entire atomic diagram of \( \alg{A} \) is encoded in that of \( \frzflt{\alg{A}}{\tau} \). We will see in a moment that \( \frzflt{\alg{A}}{\tau} \) is strongly abelian; it follows that the language \( \frzflt{L}{\tau} \) may be taken to be finite.

We can characterize terms in this language easily.

\begin{proposition}
  Every term in the language \( \frzflt{L}{\tau} \) is obtained from one of the basic operations \( t^j_{i_1 \cdots i_n}\) by possibly identifying some variables of the same sort.
\end{proposition}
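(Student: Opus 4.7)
The proof proceeds by structural induction on the depth of the term $T$. At depth zero a variable $x$ of sort $\inangle{i,j}$ coincides with the basic operation arising from the identity $L$-term (which is a $\tau$-boxmap $C_i \to C_i$) projected to its $j$-th coordinate; no identification is needed. For the inductive step, consider $T = F(T_1, \ldots, T_m)$ with $F = t^j_{\vec{i}}$ a basic operation and each $T_\ell$, by inductive hypothesis, already expressible as a basic operation $s_\ell^{j_\ell}$ with some same-sort variables identified. Let $\ell(k,a)$ index the inner term occupying position $(a,k)$ of the input matrix of $F$; sort matching forces its superscript to equal $a$, so $T_{\ell(k,a)} = s_{\ell(k,a)}^{a}$.

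The strategy is to exhibit a single $L$-polynomial $u$ of $\alg{A}$ whose $j$-th coordinate basic operation $u^j$ coincides with $T$ on $\frzflt{\alg{A}}{\tau}$ after identifying variables of matching sort. The key tool is the decomposition polynomial $D_{i_k}$, which is precisely the right gadget for assembling a $C_{i_k}$-value coordinate by coordinate. For each column $k$ define
\[ z_k \;:=\; D_{i_k}\bigl(s_{\ell(k,1)}(\vec v_{k,1}),\,\ldots,\,s_{\ell(k,K_{i_k})}(\vec v_{k,K_{i_k}}),\,\vec c_k\bigr), \]
and set $u(\vec v) := t(z_1, \ldots, z_n, \vec a)$. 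By the diagonal action of a decomposition operator recorded in the structural description that follows Proposition~\ref{prop:SA induced algebra}, the $a$-th coordinate of $z_k$ equals the $a$-th coordinate of $s_{\ell(k,a)}(\vec v_{k,a})$, that is, $s_{\ell(k,a)}^{a}(\vec v_{k,a}) = T_{\ell(k,a)}(\vec v_{k,a})$. Hence $z_k$ reproduces, coordinate by coordinate, the $C_{i_k}$-value that the semantic composition in $\frzflt{L}{\tau}$ assembles at column $k$. Applying $t^j$ yields $u^j = T$ once we identify each $\vec v_{k,a}$ in $u^j$ with the $\vec v_{\ell(k,a)}$ appearing in $T$.

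The main obstacle is bookkeeping: tracking the multi-layered variable identifications so as to present the result as one flat basic operation whose variables are identified only within a common sort, and absorbing the auxiliary parameters $\vec c_k$ of each $D_{i_k}$. These parameters appear in $u^j$ but not in the original $T$; however, the $\tau$-boxmap property of $D_{i_k}$ guarantees that its value is independent of the particular choice within each $\vec c_k$'s $\tau$-class, so each $\vec c_k$ may be freely identified with some same-sort variable already present in $T$ (or, if none, with a fresh variable of that sort, which still fits the claim's form). The resulting $u^j$ then realises $T$ as a single basic operation with only same-sort variable identifications, completing the induction.
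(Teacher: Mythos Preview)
Your inductive argument is correct and is exactly the approach the paper intends: it leaves the proof ``(by induction)'' to the reader, and the key mechanism you identify---using the decomposition term $D_{i_k}$ to reassemble the coordinate-wise outputs $s_{\ell(k,1)}^1,\ldots,s_{\ell(k,K_{i_k})}^{K_{i_k}}$ into a single $C_{i_k}$-value, then feeding these into the outer $L$-term $t$---is the right one.

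One small point: your worry about the auxiliary parameters $\vec c_k$ of $D_{i_k}$ ``appearing in $u^j$'' is misplaced. In the definition of the basic operation $t^j_{i_1\cdots i_n\,i_{n+1}\cdots i_{n'}}$, the inactive variables $v_{n+1},\ldots,v_{n'}$ contribute only to the \emph{subscript} of the symbol; they are not input columns. So once you declare the $\vec c_k$ (and the inherited auxiliaries of the inner boxmaps, and of $t$) to be the inactive part of the new $\tau$-boxmap $u$, they simply disappear into the name $u^j_{\cdots}$ and no identification or fresh variables are needed. With that correction the bookkeeping is clean: the active inputs of $u$ are exactly the $\vec v_{k,a}$, so the input columns of $u^j$ match those of $T$ up to the same-sort identifications you already describe.
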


The proof (by induction) is left to the reader.

\begin{construction}
  \begin{enumerate}[ref=\thetheorem(\arabic*)]
  \item We define an \( \frzflt{L}{\tau} \)-structure \( \frzflt{\alg{A}}{\tau} \) analogously to our definition of \( \frz{\alg{A}}{\tau} \) in Construction \ref{conststep:Atau}: each sort
    \[ \inangle{i,j}^{\frzflt{\alg{A}}{\tau}} = C_{i,j} \]Now if \( t^j_{i_1 \cdots i_n i_{n+1} \cdots i_{n'}} \) is a basic operation symbol and \(x_{k,j} \in C_{i_k,j} \) for \( 1 \leq k \leq n \) and \( 1 \leq j \leq K_{i_k} \), set
    \[ x_k = \begin{pmatrix} x_{k,1} \\ x_{k,2} \\ \vdots \\ x_{k,K_{i_k}} \end{pmatrix} \qquad (1 \leq k \leq n) \]and choose any \( \vec{a} \in C_{i_{n+1}} \times \cdots \times C_{i_{n'}} \). Let
    \[ t^{\alg{A}}(x_1, \ldots, x_n, \vec{a}) = x_0 = \begin{pmatrix} x_{0,1} \\ x_{0,2} \\ \vdots \\ x_{0,K_{i_0}} \end{pmatrix} \]It now makes sense to define 
    \[ t^{j}_{i_1 \cdots i_n i_{n+1} \cdots i_{n'}} \begin{pmatrix}
      x_{1,1} & x_{2,1} & \cdots & x_{n,1} \\
      x_{1,2} & x_{2,2} & \cdots & x_{n,2} \\
      \vdots & \vdots & \ddots & \vdots \\
      x_{1,K_{i_1}} & x_{2,K_{i_2}} & \cdots & x_{n, K_{i_n}}
    \end{pmatrix} = x_{0,j}\]

  \item\label{conststep:Btauflt} The foregoing construction generalizes to any \(L\)-structure \( \alg{B} \) having a congruence \( \tau^{\alg{B}} \) such that there exists an isomorphism \( \varphi \colon \alg{A}/\tau \rightarrow \alg{B}/{\tau^{\alg{B}}} \), and such that the same terms \(D_i(v_1, \ldots, v_{K_i}, \ldots, v_{n'}) \) define decomposition \(\tau\)-boxmaps on the classes \( \varphi(C_i) \), with constants taken from the same classes \( \varphi(C_{i_{n+1}}), \ldots, \varphi(C_{i_{n'}})\). (We do not require that no decomposition operator on \( \varphi(C_i) \) have larger arity.)

    Under these hypotheses, each \( \tau^{\alg{B}} \) class \( \varphi(C_i) \) decomposes into a product of \( K_i \) factors as above, and the analogous definition produces a well-defined \( \frzflt{L}{\tau} \)-structure \( \frzflt{\alg{B}}{\tau} \).
  \end{enumerate}
\end{construction}

We state without proof the analogues of the lemmata of Section \ref{sec:frz}, since all the proofs differ only in the bookkeeping:

\begin{proposition}
  Let \( \alg{M} = \frzflt{\alg{B}}{\tau} \) and \( \alg{N} = \frzflt{\alg{C}}{\tau} \). \begin{enumerate}
  \item Let \( \alg{D} \leq \alg{B} \) have nonempty intersection with each \( \tau \)-class; then \( \alg{D} \) satisfies the hypotheses of Construction \ref{conststep:Btauflt}, and \( \frzflt{\alg{D}}{\tau} \) is a substructure of \( \alg{M} \). Moreover, every substructure of \( \alg{M} \) is obtained in this way.

  \item Let \( \theta \leq \tau \) be a congruence on \( \alg{B} \); then \( \alg{B}/\theta \) satisfies the hypotheses of Construction \ref{conststep:Btauflt}, and \( \frzflt{\left( \alg{B}/\theta \right)}{\tau} \) is a homomorphic image of \( \alg{M} \). Moreover, every homomorphic image of \( \alg{M} \) is obtained in this way.

  \item Let \( \alg{D} \leq \alg{B} \times \alg{C} \) be the subalgebra consisting of all pairs \( \vpair{b}{c} \) such that \( \varphi^{-1}(b/\tau) = \varphi^{-1}(c/\tau) \). Then \( \alg{D} \) satisfies the hypotheses of Construction \ref{conststep:Btauflt}, and \( \frzflt{\alg{D}}{\tau} \) is the product of \( \alg{M} \) and \( \alg{N} \) in the sense of \( \frz{L}{\tau} \). (This generalizes to any number of factors.)
  \end{enumerate}
\end{proposition}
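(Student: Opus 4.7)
The plan is to mirror the proof of Proposition \ref{prop:frzHSP}, with extra bookkeeping to track the product decomposition $\varphi(C_i) = \varphi(C_{i,1}) \times \cdots \times \varphi(C_{i,K_i})$ that strong abelianness of $\tau$ and the choice of the decomposition boxmaps $d_i$ supply on each $\tau$-class. The key technical point is the following ``disassembly'' lemma: because $\tau$ is strongly abelian, every $L$-term $t(v_1,\ldots,v_n)$ that sends $\varphi(C_{i_1}) \times \cdots \times \varphi(C_{i_n})$ into $\varphi(C_{i_0})$ is automatically a $\tau$-boxmap, and the $j$-th coordinate of its output (in the product decomposition of $\varphi(C_{i_0})$) is computed by the basic $\frzflt{L}{\tau}$-operation $t^{j}_{i_1 \cdots i_n}$, the boxmap constants contributing nothing. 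Thus every $L$-term between $\tau$-classes disassembles into $K_{i_0}$ basic $\frzflt{L}{\tau}$-operations, and conversely every basic $\frzflt{L}{\tau}$-operation is the $j$-th coordinate projection of such an $L$-term.

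Given this dictionary, each clause is a direct translation. For (1), a substructure $\alg{M}' \leq \alg{M}$ is determined by nonempty sorts $D_{i,j} \subseteq \varphi(C_{i,j})$ closed under the basic $\frzflt{L}{\tau}$-operations; define $\alg{D} \subseteq \alg{B}$ by setting $\alg{D} \cap \varphi(C_i) := \varphi(D_{i,1} \times \cdots \times D_{i,K_i})$. The disassembly lemma gives closure of $\alg{D}$ under every $L$-operation, $\alg{D}$ meets every $\tau$-class by hypothesis, and the decomposition terms $D_i$ still witness boxmaps on the restricted classes, so $\alg{D}$ satisfies the hypothesis of Construction \ref{conststep:Btauflt} and $\frzflt{\alg{D}}{\tau} = \alg{M}'$; the converse is immediate. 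For (2), given a congruence $\vartheta = \{\vartheta_{i,j}\}$ on $\alg{M}$, define $\theta$ on $\alg{B}$ by declaring $b \equiv_\theta b'$ iff $b \equiv_\tau b'$ and the coordinates in their common class satisfy $b^j \mathrel{\vartheta_{i,j}} b'^j$ for every $j$; the disassembly lemma supplies compatibility with $L$-operations, and the natural map realizes $\frzflt{(\alg{B}/\theta)}{\tau} \cong \alg{M}/\vartheta$. For (3), the sorted product $\alg{M} \times \alg{N}$ has $\inangle{i,j}$-sort equal to $\varphi_{\alg{B}}(C_{i,j}) \times \varphi_{\alg{C}}(C_{i,j})$; under the natural identification
\[ \varphi_{\alg{B}}(C_i) \times \varphi_{\alg{C}}(C_i) \;\cong\; \prod_{j=1}^{K_i} \bigl( \varphi_{\alg{B}}(C_{i,j}) \times \varphi_{\alg{C}}(C_{i,j}) \bigr), \]
the $\tau$-aligned subalgebra $\alg{D} \leq \alg{B} \times \alg{C}$ is sent to $\alg{M} \times \alg{N}$ by $\frzflt{\bullet}{\tau}$, with basic operations matching on the nose.

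The only real obstacle is the disassembly lemma; it is essentially the proposition immediately preceding Construction \ref{conststep:Btauflt} (every $\frzflt{L}{\tau}$-term is a basic operation with possible identifications), read in reverse. Since an $L$-term between $\tau$-classes is a boxmap by strong abelianness, and such a boxmap into $\varphi(C_{i_0})$ produces a tuple in the decomposition $\varphi(C_{i_0,1}) \times \cdots \times \varphi(C_{i_0,K_{i_0}})$ each of whose coordinates is by definition one of the newly-introduced basic operations, the lemma is really just an unwrapping of definitions. Once granted, the three clauses are entirely formal translations, as the author announces.
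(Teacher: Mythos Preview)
Your proposal is correct and matches the paper's approach exactly: the paper states this proposition \emph{without proof}, remarking only that ``all the proofs differ only in the bookkeeping'' from Proposition~\ref{prop:frzHSP}. Your write-up supplies precisely that bookkeeping, and your ``disassembly lemma'' is just the definitional unpacking the paper leaves implicit.
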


\begin{lemma}\label{lemma:frzflt into A}
  \begin{enumerate}
  \item The smallest equationally axiomatizable class containing \( \frzflt{\alg{A}}{\tau} \) is the closure of \( \left\{ \frzflt{\alg{A}}{\tau} \right\} \) under \( \mathrm{HSP} \); this class is axiomatized by the set of all equations which hold in \( \frzflt{\alg{A}}{\tau} \). This variety is finitely axiomatizable.
  \item The sorted structure \( \frzflt{\alg{B}}{\tau} \) is abelian (resp. strongly abelian) if and only if the congruence \( \tau \) was a (strongly) abelian congruence of \( \alg{B} \).
  \item If \( \alg{A} \) belongs to a finitely decidable variety and \( \tau \) is a (strongly) solvable congruence, then \( \HSP{\frzflt{\alg{A}}{\tau}} \) is a (strongly) abelian variety.
  \item If \( \alg{A} \) belongs to any finitely decidable variety and \( \tau \) is either the solvable radical or the strongly solvable radical of \( \alg{A} \), then \( \HSP{\frzflt{\alg{A}}{\tau}} \) semantically interprets into \( \HSP{\alg{A}} \).
  \end{enumerate}
\end{lemma}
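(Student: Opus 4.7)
The plan is to follow the template established for $\frz{\alg{A}}{\tau}$ in Section~\ref{sec:frz}, carrying extra bookkeeping for the product decomposition of each $\tau$-class. I would tackle part~(2) first, since it underpins (3) and (4). A failure of $\TC{\tau}{\tau}{\bot}$ in $\alg{B}$ is witnessed by a polynomial $p(\vec{x},\vec{y}) = t(\vec{x},\vec{y},\vec{a})$; restricted to tuples from $\tau$-classes this is a $\tau$-boxmap, and projecting the output onto each coordinate of the class-decomposition yields basic operations of $\frzflt{L}{\tau}$. Under this translation the $\tau$-pairs on the inputs become full-sort pairs in $\frzflt{\alg{B}}{\tau}$, and the displayed inequality witnessing failure of $\TC{\tau}{\tau}{\bot}$ becomes the corresponding failure of $\TC{\top}{\top}{\bot}$ at some sort $\inangle{i_0,j}$. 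The converse direction lifts a sorted witness back to a polynomial witness in $\alg{B}$ by reading the basic operations as restrictions of $L$-terms to $\tau$-classes.

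For (3) I would copy the structure of the corresponding result in Section~\ref{sec:frz}: a failed (strong) term condition in $\HSP{\frzflt{\alg{A}}{\tau}}$ lives in a finite $\alg{M} = \alg{N}/\vartheta$ with $\alg{N} \leq (\frzflt{\alg{A}}{\tau})^X$; by the analogue of Proposition~\ref{prop:frzHSP} stated just above the lemma, $(\frzflt{\alg{A}}{\tau})^X = \frzflt{\alg{P}}{\tau}$ with $\alg{P} \leq \alg{A}^X$ the subalgebra of $\tau$-constant tuples, $\alg{N} = \frzflt{\alg{B}}{\tau}$ for some $\alg{B} \leq \alg{P}$, and $\alg{M} = \frzflt{\left(\alg{B}/\theta\right)}{\tau}$ with $\theta \leq \tau^{\alg{B}}$. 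Since $\theta$ is (strongly) solvable and, by our standing assumption on $\HSP{\alg{A}}$, therefore (strongly) abelian, part~(2) applied to $\alg{B}/\theta$ contradicts the supposed failure. For (4) I would imitate the Corollary of Section~\ref{sec:frz}: the radical $\tau$ is uniformly definable in $\HSP{\alg{A}}$, so with chosen parameters $c_1,\ldots,c_M$ each sort $\varphi(C_i)$ is definable, and the fixed decomposition polynomials $D_i$ provide definable coordinate projections $C_i \to C_{i,j}$. A basic operation $t^j_{i_1 \cdots i_n i_{n+1} \cdots i_{n'}}$ is then interpreted by reassembling inputs from their coordinate representations via the $D_{i_k}$, applying $t^{\alg{A}}$, and extracting the $j$-th coordinate of the output via $D_{i_0}$.

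Part~(1) contains the one ingredient new beyond Section~\ref{sec:frz}, namely finite axiomatizability. Here I would combine three observations: the proposition preceding this lemma reduces every term of $\frzflt{L}{\tau}$ to a basic operation modulo variable identification; part~(2) applied to $\alg{A}$ itself makes $\frzflt{\alg{A}}{\tau}$ strongly abelian; and the essential-arity bound for strongly abelian finite algebras (surrounding Theorem~\ref{thm:MV11.9} and its source in \cite[Ch.~11]{MV1989}) cuts the basic operations of $\frzflt{L}{\tau}$ down to finitely many modulo term equivalence. One thus has a finite strongly abelian algebra in a finite sorted language, and finite equational axiomatizability follows from the multi-sorted HSP theorem together with the standard finite equational base for strongly abelian finite algebras from \cite[Ch.~11]{MV1989}. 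The principal obstacle throughout is coordination of the decomposition choices: every step of (3) and (4) must use the \emph{same} polynomials $D_i$ fixed at the start of the construction, so one must verify that the product structure on $\varphi(C_i) \subseteq \alg{B}$ pulled back from $\alg{A}$ coincides with the one produced intrinsically by those same $D_i$ on each $\alg{B} \leq \alg{P}$ and after quotienting by $\theta \leq \tau$.
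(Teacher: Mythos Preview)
Your proposal is correct and follows essentially the same approach as the paper. The paper explicitly states that parts (2)--(4) differ from their Section~\ref{sec:frz} analogues only in bookkeeping, and for (1) uses exactly the mechanism you describe: strong abelianness of $\frzflt{\alg{A}}{\tau}$ gives a uniform bound on the essential arity of each basic operation (the paper makes this explicit as $\log_2|C_{i_0}| \cdot \max_i K_i$ via the equivalence-relation characterization of strong abelianness), whence the finitely many equations in that many variables true in $\frzflt{\alg{A}}{\tau}$ axiomatize the variety.
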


\begin{proof}
  The only new statement here is that \( \HSP{\frzflt{\alg{A}}{\tau}} \) is finitely axiomatizable.

  It is well known (e.g. \cite[Theorem 0.17]{MV1989}) that an (ordinary single-sorted) algebra \( \alg{X} \) is strongly abelian if and only if for each term \(t(v_1, \ldots, v_n) \) there exist equivalence relations \(E_1, \ldots, E_n \) on \( X \) such that for all \(x_1, y_1 \ldots, x_n,y_n \in X \),
  \[t(x_1, \ldots, x_n) = t(y_1, \ldots, y_n) \iff \inangle{x_1,y_1} \in E_1, \ldots, \inangle{x_n,y_n} \in E_n. \]Likewise, a congruence \( \tau \) is strongly abelian iff for each term \(t\) and all \(\tau\)-classes
  \[ C_{i_1} \times \cdots \times C_{i_n} \stackrel{t}{\rightarrow} C_0 \]there exist equivalence relations \( E_k \) on \( C_{i_k} \) such that for all \(x_k, y_k \in C_{i_k}\),
  \[ t(x_1, \ldots, x_n) = t(y_1, \ldots, y_n) \iff \inangle{x_1,y_1} \in E_1, \ldots, \inangle{x_n,y_n} \in E_n. \]It follows that such a term action cannot depend on more than \(\log_2(|C_{i_0}|) \) of its variables; in \( \HSP{\frzflt{\alg{A}}{\tau}} \), this means that the basic operation \( t^j_{i_1 \cdots i_n} \) can only depend essentially on at most \(\log_2(|C_{i_0}|) \cdot \max_i K_i\) variables. Since there are only finitely many equations using this many variables, and since \( \HSP{\frzflt{\alg{A}}{\tau}} \) is axiomatized by the subset of these which are true in \( \frzflt{\alg{A}}{\tau} \), we are done.
\end{proof}

\section{Main Semantic Interpretation}\label{sec:main interpretation}

The goal of this section is to prove

\begin{theorem}\label{thm:bipartite into frzflt}
  Let \( \alg{A} \) be a finite algebra in a variety where every strongly solvable congruence is strongly abelian. Let \( \tau \) be the strongly solvable radical of \( \alg{A} \), 
  \begin{equation} C_{i_1} \times \cdots \times C_{i_n} \stackrel{t}{\rightarrow} C_{i_0} \label{eq:boxmap}\end{equation}be any \( \tau \)-boxmap, and let \(K_{i_0}\) be the greatest arity of a decomposition \( \tau \)-boxmap on \(C_{i_0}\). If the map in \eqref{eq:boxmap} depends essentially on more than \(K_{i_0}\) variables, then the class of bipartite graphs interprets semantically into \( \HSP{\frzflt{\alg{A}}{\tau}} \).
\end{theorem}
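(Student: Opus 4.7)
The plan is to adapt the argument of \cite[Theorem 11.9]{MV1989} to the sorted setting of \( \frzflt{L}{\tau} \), where \( \HSP{\frzflt{\alg{A}}{\tau}} \) is strongly abelian by Lemma \ref{lemma:frzflt into A}. First I would translate the hypothesis into the sorted language: the boxmap \( t \) determines basic operation symbols \( t^j_{i_1 \cdots i_{n'}} \) for \( 1 \leq j \leq K_{i_0} \), and by the matrix characterization of strongly abelian operations each \( t^j \) depends on its arguments in a canonical rectangular fashion, governed by equivalence relations on the sub-sorts \( \inangle{i_k, \ell} \). Call an input index \( k \in \{1, \ldots, n\} \) \emph{active for} \( j \) if \( t^j \) depends essentially on some \( \inangle{i_k, \ell} \); the hypothesis that \( t \) depends on more than \( K_{i_0} \) of its input variables says that the union over \( j \) of the sets of active inputs has size strictly greater than \( K_{i_0} \). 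Were each \( t^j \) active in at most one input, the standard argument extracting a decomposition operator from a rectangular-dependence term (as in \cite[Lemma 11.4]{MV1989}) would produce a decomposition \( \tau \)-boxmap on \( C_{i_0} \) of arity exceeding \( K_{i_0} \), contradicting the maximality of \( K_{i_0} \); hence some output coordinate \( j^* \) admits two distinct input indices \( k, k' \) both active for it.

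Second, for each finite bipartite graph \( G = (U, V; E) \) I would construct a structure \( \alg{M}_G \in \HSP{\frzflt{\alg{A}}{\tau}} \) whose atomic diagram encodes \( G \). The structure sits inside a power of \( \frzflt{\alg{A}}{\tau} \) with coordinate set essentially \( E \). I would select \emph{vertex points} of the sub-sorts feeding the active positions \( k, k' \) of \( t^{j^*} \), one per vertex of \( U \) and one per vertex of \( V \), whose coordinate values are arranged to be constant precisely on the edges incident to that vertex and otherwise generic; the remaining input slots of \( t^{j^*} \) are filled by fixed auxiliary points of the appropriate sorts. The value of \( t^{j^*} \) applied to a \( U \)-vertex point \( \point{u} \) and a \( V \)-vertex point \( \point{v} \) then agrees, at coordinate \( e \in E \), with \( t^{j^*} \) applied to a pair of auxiliary defaults precisely when \( e \) is incident to both \( u \) and \( v \), so the equality pattern of these outputs recovers the edge relation of \( G \). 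Taking the substructure of the power generated by the vertex and auxiliary points yields \( \alg{M}_G \).

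Finally, I would check that \( G \mapsto \alg{M}_G \) furnishes a semantic interpretation of the class of finite bipartite graphs into \( \HSP{\frzflt{\alg{A}}{\tau}} \): the color predicates (``is a \( U \)-vertex point'' and ``is a \( V \)-vertex point'') and the edge relation between them are definable by quantifier-free formulas in \( \frzflt{L}{\tau} \) augmented by finitely many named auxiliary constants, and every finite bipartite graph arises from some \( \alg{M}_G \) in this way. The step I expect to be the main obstacle is the middle one: ruling out accidental identifications in the generated substructure \( \alg{M}_G \) --- both between \( U \)-type and \( V \)-type vertex points (which would collapse the coloring) and between outputs of \( t^{j^*} \) at non-edge coordinates (which would spuriously add edges). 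Strong abelianness of \( \frzflt{\alg{A}}{\tau} \), together with the fact that every term in \( \frzflt{L}{\tau} \) reduces to a basic operation symbol with variable identifications (the term-characterization proposition in Section \ref{sec:frzflt}), is what keeps this bookkeeping tractable, but it is where the delicacy of the proof will be concentrated.
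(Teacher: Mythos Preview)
Your opening paragraph is correct and recovers the content of Proposition~5.2: some output coordinate $j^*$ has $t^{j^*}$ depending on two input columns. From there, however, your plan has a genuine gap.

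The paper does not work directly with an arbitrary essentially-binary sorted term. Lemmas \ref{lemma:leftinversesunary} and \ref{lemma:essentially binary and not left invertible} refine the term to one that is \emph{not left-invertible} at either essential variable, and this non-invertibility is the engine of the whole interpretation. It is used to define the equivalence $\propto$ (``agree under every non-left-invertible unary polynomial''), and then to build the quotient $\alg{C} = \alg{F}'/\theta$ of Construction \ref{const:F(X)/theta}, which manufactures a new element $z$ satisfying $z \propto 0$ but $z \neq 0$. The encoding algebra $\alg{D}(\mathbb{G})$ is a subpower of this $\alg{C}$, indexed by the vertex set together with two auxiliary coordinates $\clubsuit,\spadesuit$ --- not by the edge set as you propose --- and edges are represented by separate generators $\chi_{e,\clubsuit}, \chi_{e,\spadesuit}$ rather than detected solely by values of $t^{j^*}$ on pairs of vertex points.

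The step you flag as the main obstacle is the hard one, but your proposed resolution (quantifier-free definability with constants) will not work: in a generated subpower there is no quantifier-free way to recognize the generators among the elements they generate. The paper's solution is to define the set $\NRINV$ of elements not in the range of any non-right-invertible term, then a preorder $\leq$ given by essentially-unary images, and to prove (Claim \ref{claim:sim separates NRINV}) that the off-diagonal generators correspond exactly to the $\leq$-maximal $\sim$-classes meeting $\NRINV$. Edge-type generators are then separated from vertex-type by possessing a $\propto$-equivalent but $\sim$-inequivalent partner --- which is precisely where $z$, $\clubsuit$, and $\spadesuit$ earn their keep. All of this is first-order but not quantifier-free, and none of it is available without first securing the non-left-invertible binary term.
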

\newcounter{bipartitethm}
\setcounter{bipartitethm}{\value{theorem}}

The proof of Theorem \ref{thm:main} will be a quick consequence of this.

For the remainder of this section, let \( \alg{A} \) be a fixed finite algebra satisfying the hypotheses of Theorem \ref{thm:bipartite into frzflt}. As before, we choose a fixed enumeration \( C_1, \ldots, C_M \) of the \( \tau \)-classes. Fix decomposition \( \tau \)-boxmaps
\[ d_i(v_1, \ldots, v_{K_i}) = D_i(v_1, \ldots, v_{K_i}, \vec{a}) \colon C_i^{K_i} \rightarrow C_i \]of maximal arity.

\begin{proposition}
  The algebra \( \frzflt{\alg{A}}{\tau} \) is essentially unary if and only if every \( \tau \)-boxmap
  \begin{equation}\label{eq:boxmap2} C_{i_1} \times \cdots \times C_{i_n} \stackrel{t}{\rightarrow} C_{i_0} \end{equation}depends on at most \( K_{i_0} \) variables.
\end{proposition}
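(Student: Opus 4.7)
The proposition is a biconditional; the forward direction is a straightforward counting argument, while the reverse direction carries the content and requires the product-decomposition structure of both the codomain \( C_{i_0} \) and, in one subcase, an input class \( C_{i_a} \).

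For \( (\Rightarrow) \), suppose \( \frzflt{\alg{A}}{\tau} \) is essentially unary, and let \( t\colon C_{i_1}\times\cdots\times C_{i_n}\to C_{i_0} \) be any \(\tau\)-boxmap. In the sorted encoding, \( t \) is represented by the \( K_{i_0} \) basic operations \( t^1_{i_1\cdots i_n},\ldots,t^{K_{i_0}}_{i_1\cdots i_n} \), one per output coordinate. By hypothesis each \( t^j \) depends on at most one cell \( (a_j,\alpha_j) \), so the set of column indices appearing among them has cardinality at most \( K_{i_0} \); since \( t \) depends on its \( k \)-th variable iff some \( t^j \) depends on a cell in column \( k \), we conclude that \( t \) depends on at most \( K_{i_0} \) variables.

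For \( (\Leftarrow) \), by the characterization of terms in \( \frzflt{L}{\tau} \) as basic operations with some variables of equal sort identified, it suffices to show each basic operation \( t^{j_0}_{i_1\cdots i_n} \) depends on at most one cell. Arguing by contradiction, suppose \( t^{j_0} \) depends on two distinct cells \( (a,\alpha) \) and \( (b,\beta) \); the plan is to manufacture a \(\tau\)-boxmap into \( C_{i_0} \) depending on \( K_{i_0}+1 \) variables, contradicting the hypothesis. The main tool is the diagonal-picker identity \( d_{i_0}(u_1,\ldots,u_{K_{i_0}})^j = u_j^j \) and the fact that each factor \( C_{i_0,j} \) has at least two elements (otherwise \( d_{i_0} \) would not depend on its \( j \)-th input).

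When \( a \neq b \), I form the composite
\[
s(\vec{x},z_1,\ldots,z_{K_{i_0}-1}) \;=\; d_{i_0}\bigl(z_1,\ldots,z_{j_0-1},\,t(\vec{x}),\,z_{j_0},\ldots,z_{K_{i_0}-1}\bigr),
\]
a \(\tau\)-boxmap from \( C_{i_1}\times\cdots\times C_{i_n}\times C_{i_0}^{K_{i_0}-1} \) into \( C_{i_0} \). Its \( j_0 \)-th projection equals \( t^{j_0}(\vec{x}) \) and so depends on the two distinct columns \( a,b \), while each other projection \( s^j \) reads the \( j \)-th coordinate of one of the fresh padding variables and therefore depends nontrivially on its column, giving \( 2 + (K_{i_0}-1) = K_{i_0}+1 \) columns in total. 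In the remaining case \( a=b \), I first pre-compose \( t \) in its \( a \)-th slot with \( d_{i_a} \); the diagonal-picker identity for \( d_{i_a} \) converts the two same-column cells \( (a,\alpha),(a,\beta) \) into cells in the two distinct columns \( u_\alpha,u_\beta \) of the resulting \(\tau\)-boxmap \( t' \), returning me to the previous case. The main obstacle is precisely the orchestration of two independent decompositions—that of the codomain, to pad out the unused output coordinates with fresh \( z \)'s, and that of the input factor \( C_{i_a} \), to diffuse a same-column multi-coordinate dependence into a genuine multi-column one—in order to translate a cell-level failure of unarity into a column-level violation of the hypothesis.
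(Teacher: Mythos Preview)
Your proof is correct and follows essentially the same strategy as the paper's: pad the unused output coordinates with fresh $C_{i_0}$-variables via $d_{i_0}$, and pre-compose with an input decomposition operator to turn cell-level dependence into column-level dependence. The only organizational difference is that the paper pre-composes \emph{every} input slot $x_k$ with its own $d_{i_k}$ at once, which handles your two cases $a\neq b$ and $a=b$ uniformly and avoids the case split.
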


\begin{proof}We prove each contrapositive.

  (\(\Rightarrow\)): Let the action of \(t(v_1, \ldots, v_{K_{i_0}+1},\ldots)\) on the box in Equation \eqref{eq:boxmap2} depend essentially on at least the first \(K_{i_0}+1\) variables. Choose a witnessing assignment
  \[ t(a, b_2, \ldots, b_n) \neq t(a', b_2, \ldots, b_n) \]in the first variable: then for some \( 1 \leq j \leq K_{i_0} \),
  \[ t(a, b_2, \ldots, b_n) \not \sim_j t(a', b_2, \ldots, b_n) \]where 
\[ x \sim_j y \iff x = \begin{matrix} x^1 \\ x^2 \\ \vdots \\ x^K \end{matrix}, y = \begin{matrix} y^1 \\ y^2 \\ \vdots \\ y^K \end{matrix} \text{ and } x^j = y^j \]
For this \(j\), the term \( t^j_{i_1 \cdots i_n}\) depends on one of the variables in its first column. Similarly, for each of the variables \(v_2, \ldots, v_{K_{i_0}+1} \) one of the terms \( t^j_{i_1 \cdots i_n}\) depends on a variable in the corresponding column. Now use the pigeonhole principle to get one of the \( t^j_{i_1 \cdots i_n}\) depending on at least two variables.

  (\( \Leftarrow \)): We first claim that if \( t^j_{i_1 \cdots i_n} \) depends in \( \frzflt{\alg{A}}{\tau} \) on the variable in column \(c \) and row \(r\), then in \(\alg{A}\) the operation
  \[ d_{i_0} \left( y_1, \ldots, y_{j-1}, t \left(d_{i_1} \left(x_1^1, \ldots, x_1^{K_{i_1}}\right), \ldots, d_{i_n}\left(x_n^1, \ldots, x_n^{K_{i_n}}\right)\right), \ldots, y_{K_{i_0}} \right) \]depends on \(x_c^r\) (as well as on each of the \(y\)s).

  To see this, pick a witnessing package
  \[ t^j_{i_1 \cdots i_n} \begin{pmatrix}
    b_1^1 & b_2^1 & \cdots & b_n^1 \\
    b_1^2 & b_2^2 & \cdots & b_n^2 \\
    \vdots & \vdots & a & \vdots \\
    b_1^{K_{i_1}} & b_2^{K_{i_2}} & \cdots & b_n^{K_{i_n}}
  \end{pmatrix} \neq t^j_{i_1 \cdots i_n} \begin{pmatrix}
    b_1^1 & b_2^1 & \cdots & b_n^1 \\
    b_1^2 & b_2^2 & \cdots & b_n^2 \\
    \vdots & \vdots & a' & \vdots \\
    b_1^{K_{i_1}} & b_2^{K_{i_2}} & \cdots & b_n^{K_{i_n}}
  \end{pmatrix} \]Upstairs in \( \alg{A} \) this becomes
  \[ t(b_1, \ldots, \hat{a}, \ldots, b_n) \not \sim_j t(b_1, \ldots, \hat{a}', \ldots, b_n) \]which is what we need.

  Now, let \(s\) is any term of \( \frzflt{L}{\tau} \) which depends in \( \frzflt{\alg{A}}{\tau} \) on two of its variables. Without loss of generality, we may take s to be equal to \(t^j_{i_1 \cdots i_n}\), since identification of variables can never increase essential arity. Let \(s\) depend on \(v_c^r, v_{c'}^{r'} \); then the term
  \[ d_{i_0} \left( y_1, \ldots, y_{j-1}, t \left(d_{i_1} \left(x_1^1, \ldots, x_1^{K_{i_1}}\right), \ldots, d_{i_n}\left(x_n^1, \ldots, x_n^{K_{i_n}}\right)\right), \ldots, y_{K_{i_0}} \right) \] depends on all the \(y\) variables and \(x_c^r, x_{c'}^{r'} \).
\end{proof}

\begin{lemma}\label{lemma:leftinversesunary}
  Let \(t(v_1, v_2, \ldots, v_n) \) be an \( \frzflt{L}{\tau} \)-term.
  \begin{enumerate}
  \item If the formula \(  t(v_1, v_1, v_3, \ldots, v_n) = v_1 \) is well-formed and holds universally in \( \frzflt{\alg{A}}{\tau} \), then \(t\) is essentially unary in \( \frzflt{\alg{A}}{\tau} \).
  \item If for some terms \(s_k(v_1, v_2, \ldots)\), the formula 
    \[  t(s_1(\vec{v}), s_2(\vec{v}), \ldots, s_n(\vec{v})) = v_1 \]is well-formed and holds universally in \( \frzflt{\alg{A}}{\tau} \) (in which case we call \(t\) right-invertible) then \(t\) is essentially unary in \( \frzflt{\alg{A}}{\tau} \).
  \end{enumerate}
\end{lemma}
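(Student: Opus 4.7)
The plan rests on two ingredients: first, that $\frzflt{\alg{A}}{\tau}$ is strongly abelian (Lemma \ref{lemma:frzflt into A}), and second, the observation that every sort $\langle i,j \rangle$ of $\frzflt{\alg{A}}{\tau}$ is \emph{prime}, in the sense that it admits no decomposition polynomial of arity greater than $1$. The second ingredient is where the maximality of $K_i$ is spent: a binary decomposition polynomial on $\langle i,j \rangle$ would split $C_{i,j}$ as a product $P \times Q$ with both factors nontrivial, and refining the existing decomposition $C_i = C_{i,1} \times \cdots \times C_{i,K_i}$ in the $j$-th slot yields a $(K_i+1)$-factor decomposition of $C_i$. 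Pulling the witnessing $\frzflt{L}{\tau}$-polynomial back to an $L$-polynomial over $\alg{A}$ (its $\frzflt{\alg{A}}{\tau}$-constants become $\tau$-class representatives) and composing with $D_i$ delivers a decomposition $\tau$-boxmap on $C_i$ of arity $K_i+1$, contradicting maximality.

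For Part (1), strong abelianness furnishes equivalence relations $E_1, \ldots, E_n$ on the input sorts of $t$ such that $t(\vec{x}) = t(\vec{y})$ iff $(x_k, y_k) \in E_k$ for every $k$. The hypothesis $t(v_1, v_1, v_3, \ldots, v_n) = v_1$ forces every $E_j$ with $j \geq 3$ to be the universal relation, so $t$ collapses to a binary idempotent term $g(v_1, v_2) = t(v_1, v_2, c_3, \ldots, c_n)$ on the common sort of $v_1, v_2$. If $g$ depended essentially on both variables, the multi-sorted analogue of \cite[Lemma 11.3]{MV1989} (invoked at the top of the excerpt) would deliver a binary decomposition term on that sort, contradicting primeness.

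For Part (2), first fix arbitrary elements $\vec{c}$ in the sorts of $v_2, \ldots, v_\ell$, so that $t(s_1(v_1, \vec{c}), \ldots, s_n(v_1, \vec{c})) = v_1$ holds in the single variable $v_1$. Suppose toward contradiction that $t$ depends essentially on distinct variables $v_i$ and $v_j$, so that $E_i$ and $E_j$ are both non-universal. Define
\[ d(y_1, y_2) := t(w_1, \ldots, w_n), \quad w_i := s_i(y_1, \vec{c}),\ w_j := s_j(y_2, \vec{c}),\ w_k := s_k(y_1, \vec{c})\ \text{for } k \neq i, j. \]
Idempotency $d(y, y) = y$ is immediate from right-invertibility. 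For the arity-$2$ decomposition equation, set $z_\alpha := d(y_{\alpha 1}, y_{\alpha 2})$; applying right-invertibility in the form $z_\alpha = t(s_1(z_\alpha, \vec{c}), \ldots, s_n(z_\alpha, \vec{c}))$ against the defining tuple for $z_\alpha$ and invoking strong abelianness of $t$ yields the congruences $s_i(z_1, \vec{c}) \equiv_{E_i} s_i(y_{11}, \vec{c})$, $s_j(z_2, \vec{c}) \equiv_{E_j} s_j(y_{22}, \vec{c})$, and $s_k(z_\alpha, \vec{c}) \equiv_{E_k} s_k(y_{\alpha 1}, \vec{c})$ for $k \neq i, j$, precisely what is needed to collapse $d(z_1, z_2)$ to $d(y_{11}, y_{22})$. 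That $d$ depends essentially on both inputs follows because the bijection $\bar{t}\colon \prod_k X_k/E_k \to (\text{output sort})$ forces $y \mapsto s_i(y, \vec{c})/E_i$ to be surjective onto the nontrivial quotient $X_i/E_i$ (and analogously for $j$), so varying $y_1$ genuinely shifts $w_i$ modulo $E_i$ and hence alters $t(\vec{w})$. This exhibits a binary decomposition polynomial on the output sort of $t$, contradicting primeness.

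The main obstacle I expect is the lifting step inside the primeness argument: taking a binary decomposition polynomial living in $\frzflt{\alg{A}}{\tau}$ on sort $\langle i, j \rangle$, interleaving it with $D_i$'s action on the other $K_i - 1$ coordinates of $C_i$, and re-expressing the composite as a genuine decomposition $\tau$-boxmap of arity $K_i + 1$ over $\alg{A}$, with all $\frzflt{\alg{A}}{\tau}$-constants translated into $\tau$-class constants of $\alg{A}$. Once this bookkeeping is discharged, both parts of the lemma follow from strong abelianness alone, and Part (1) may alternatively be recovered as a special case of Part (2) by taking $s_1 = s_2 = v_1$ and $s_k = v_k$ for $k \geq 3$.
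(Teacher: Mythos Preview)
Your Part (1) tracks the paper's argument closely: both kill the dependence on $v_3, \ldots, v_n$ via strong abelianness (you use the equivalence-relation characterization, the paper uses the overlapping-range argument---these are equivalent), and both then lift the resulting binary idempotent on sort $\langle i_0, j_0 \rangle$ back to $\alg{A}$, sandwiching it with $d_{i_0}$ to manufacture an idempotent $\tau$-boxmap on $C_{i_0}$ that would depend on $K_{i_0}+1$ variables.

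Your Part (2) is genuinely different. The paper reduces to Part (1) iteratively: it introduces fresh copies $v_1^1, \ldots, v_1^n$ of the first variable, forms $\hat{t}(v_1^1, \ldots, v_1^n, \vec{v}) = t(s_1(v_1^1, \vec{v}), \ldots, s_n(v_1^n, \vec{v}))$, applies Part (1) to successive pairs $v_1^k, v_1^{k'}$ to show $\hat{t}$ depends on a single $v_1^k$, and then finishes with a short range argument transferring this to $t$. You instead single out two essential slots $v_i, v_j$ of $t$, route $y_1$ through $s_i$ and $y_2$ through $s_j$, and verify the decomposition identity for the resulting $d(y_1, y_2)$ directly.

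The gap is exactly where you flagged it. Your primeness principle is stated for decomposition \emph{polynomials} on $\langle i_0, j_0 \rangle$, but $K_{i_0}$ is the maximal arity of a decomposition \emph{$\tau$-boxmap} on $C_{i_0}$. When you lift $d$ back to $\alg{A}$, the $\frzflt{\alg{A}}{\tau}$-constants $\vec{c}$ become honest parameters sitting in specific $\tau$-classes, and the lifted $L$-polynomial need not be a $\tau$-boxmap: the definition requires the frozen arguments to be ones on which the restricted term action is \emph{independent}, and nothing forces independence in the $\vec{c}$-slots. So sandwiching with $D_{i_0}$ yields a $(K_{i_0}+1)$-ary idempotent polynomial on $C_{i_0}$, not a $\tau$-boxmap, and maximality of $K_{i_0}$ does not obviously apply. (In Part (1) this issue does not arise, because once $E_3, \ldots, E_n$ are universal the constants $c_3, \ldots, c_n$ are irrelevant and the lift involves no genuine parameters.)

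The cleanest repair is to abandon the decomposition verification: let $\tilde{d}(y_1, y_2, \vec{v}) = t(\ldots, s_i(y_1, \vec{v}), \ldots, s_j(y_2, \vec{v}), \ldots)$ be the \emph{term} before substituting $\vec{c}$. Since $\tilde{d}(y, y, \vec{v}) = y$, Part (1) applies directly and forces $\tilde{d}$ to be essentially unary; but your surjectivity argument (evaluated at $\vec{v} = \vec{c}$) shows $\tilde{d}$ depends on both $y_1$ and $y_2$, giving the contradiction. This lands you back in the orbit of the paper's reduction, though via a single invocation of Part (1) rather than an iteration.
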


\begin{proof}
  \begin{enumerate}[ref=(\arabic*)]
  \item\label{lemitem:xxyx} For any \( y_3, y_3', \ldots, y_n, y_n' \) in the appropriate sorts, the ranges of the polynomials
    \[ t(v_1, v_2, \vec{y}), \qquad t(v_1, v_2, \vec{y'}) \]are not disjoint. Since \( \frzflt{\alg{A}}{\tau} \) is strongly abelian, all such polynomials must in fact be equal.

    Let \(t\) be a specialization of \(s^j_{i_1 i_2 \cdots} \) for some term \(s(x_1, x_2, \ldots) \) in \(L\). Since \( v_1, v_2\) have the same sort as \(t\), we may as well assume that \(v_1\) represents the \(j\) coordinate of \(x_1\), and similarly for \(v_2\). The operation
    \begin{align*} &g(y_1,y_2, \ldots, j_{j-1},x_1,x_2,\ldots,y_{j+1},\ldots,y_{K_{i_0}}) \\ &\quad = \\& d_{i_0}(y_1,y_2, \ldots, y_{j-1}, s(x_1,x_2, \ldots), y_{j+1}, \ldots, y_{K_{i_0}}) \end{align*}then depends only on the variables shown (i.e.~not on \(x_3,\ldots\)) as a function on
    \[ \underbrace{C_{i_0} \times \cdots \times C_{i_0}}_{j-1} \times C_{i_0} \times C_{i_0} \times C_{i_3} \times \cdots \times C_{i_\ell} \times \underbrace{C_{i_0} \times \cdots \times C_{i_0}}_{n-j} \rightarrow C_{i_0}\]and is idempotent on the variables in sort \( \inangle{i_0,j} \). Hence \( \alg{A} \) has a \(\tau\)-boxmap
    \[ g(y_1, \ldots, y_{j-1},x_1,x_2,y_{j+1},\ldots,y_{K_{i_0}}) \]which is an idempotent operation on \( C_{i_0} \) and depends on all the \(y_k\). By maximality this operation cannot depend on both \(x_1\) and \(x_2\), implying that \(t\) did not depend on both \(v_1\) and \(v_2 \) to begin with.

  \item Let \(v_1^1, \ldots, v_1^n \) be variables of the first input sort of \(s\). By part \ref{lemitem:xxyx}, the term
    \[ t(s_1(v^1_1,v_2,\ldots), s_2(v^1_1,v_2, \ldots), \ldots, s_{n-1}(v^1_1,v_2, \ldots), s_n(v^n_1,v_2,\ldots)) \]depends on none of \(v_2, \ldots, v_n\) and on only one of \(v^1_1, v^n_1\). Proceeding inductively, we see that
    \[\hat{t}(v_1^1,v_1^2,\ldots,v_1^n,v_2,\ldots) = t(s_1(v_1^1,v_2, \ldots),s_2(v_1^2,v_2,\ldots),\ldots,s_n(v_1^n,v_2,\ldots)) \]depends on just one variable, say \( v_1^1 \), and in fact \[\hat{t}(v_1^1,v_1^2,\ldots,v_1^n,v_2,\ldots) = v_1^1.\]

    We claim that \(t\) depends only on its first variable. To see this, let \( a_1, a_2, a_2', \ldots, a_n, a_n', \vec{b} \) be any elements of the appropriate sorts. Define \begin{align*}
      u &= t(a_1, a_2, \ldots, a_n) \\
      u'&= t(a_1, a_2', \ldots, a_n') \\
      q_2  &= s_2(a_2, \vec{b}) \\
      q_2' &= s_2(a_2',\vec{b}) \\
      &\vdots \\
      q_n' &= s_n(a_n', \vec{b})
    \end{align*}Then since the ranges of \(t(v_1, a_2, \ldots, a_n) \) and \(t(v_1, q_2, \ldots, q_n) \) both contain \(u\), these two polynomials must be equal; likewise the polynomials \(t(v_1, a_1', \ldots, a_n') \) and \( t(v_1, q_2', \ldots, q_n') \). But \begin{align*}
      u = t(s_1(u,\vec{b}),q_2, \ldots, q_n) &= t(s_1(u,\vec{b}),q_2', \ldots, q_n') \\ &\Downarrow \\
      t(v_1,q_2, \ldots, q_n) &= t(v_1, q_2', \ldots, q_n')
    \end{align*}which shows that \begin{align*}
      t(v_1, a_2, \ldots, a_n) &= t(v_1, q_2, \ldots, q_n) \\
      &= t(v_1, q_2', \ldots, q_n') \\
      &= t(v_1, a_2', \ldots, a_n') \end{align*}Since \(a_k, a_k'\) were arbitrary, we are done.
  \end{enumerate}
\end{proof}

\begin{lemma}\label{lemma:essentially binary and not left invertible}
  If \( \frzflt{\alg{A}}{\tau} \) is not essentially unary, then there is an \(\frzflt{L}{\tau}\)-term depending essentially in \( \frzflt{\alg{A}}{\tau} \) on at least two variables and not left-invertible at any.
\end{lemma}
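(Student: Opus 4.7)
My plan is to take a term witnessing non-essential-unariness and modify it by precomposing each essential argument slot with a suitable non-injective unary operation, thereby destroying left-invertibility without destroying essential dependence. By hypothesis I fix an $\frzflt{L}{\tau}$-term $t(v_1, \ldots, v_n)$ of essential arity at least two. Let $E \subseteq \{1, \ldots, n\}$ be the set of essential positions (so $|E| \geq 2$), and let $L \subseteq E$ denote the subset of positions at which $t$ is left-invertible. If $L = \emptyset$, $t$ itself is the desired term, so assume $L \neq \emptyset$.

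For each $i \in L$ I fix a witness $r_i(v_0, \vec w_i)$ with $r_i(t(v_1, \ldots, v_n), \vec w_i) = v_i$ identically. The right-hand side is independent of $\vec w_i$, so by strong abelianness of $\frzflt{\alg{A}}{\tau}$ combined with the characterization of strongly abelian term operations recalled in the proof of Lemma~\ref{lemma:frzflt into A}, the equivalence relation on each $\vec w_i$-sort governing $r_i$ is forced to be the total relation. Hence $r_i$ does not depend essentially on $\vec w_i$, and it defines an essentially unary operation $\rho_i \colon S_{i_0} \to S_i$ with $\rho_i(t(v_1, \ldots, v_n)) = v_i$ (where $S_{i_0}$ is the output sort of $t$ and $S_i$ is the input sort of its $i$th slot). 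Trivially $\rho_i$ is surjective onto $S_i$. The key structural point---and the one place where the hypothesis $|E| \geq 2$ actually enters---is that $\rho_i$ cannot be injective on $S_{i_0}$: otherwise the identity $\rho_i(t) = v_i$ would pin down $t(v_1, \ldots, v_n)$ as the unique preimage $\rho_i^{-1}(v_i)$, contradicting essential dependence of $t$ on some $j \in E \setminus \{i\}$.

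Using these $\rho_i$, for $i \in L$ I define the term $\sigma_i := r_i(v_i', \vec w_i)$ (with $v_i'$ a fresh variable of sort $S_{i_0}$), and for $i \notin L$ I set $\sigma_i := v_i'$ (a fresh variable of sort $S_i$). The candidate term is
\[ t'(v_1', \ldots, v_n', \vec w_L) \;:=\; t\bigl(\sigma_1, \sigma_2, \ldots, \sigma_n\bigr). \]
Surjectivity of each $\sigma_i$ guarantees $t'$ depends essentially on $v_i'$ for every $i \in E$, so $t'$ has essential arity at least two. For non-left-invertibility at each slot of $t'$: if $i \in L$, non-injectivity of $\rho_i$ yields $v_i' \neq v_i''$ with $\rho_i(v_i') = \rho_i(v_i'')$, so $t'$ is not even injective in its $v_i'$-slot and cannot be left-invertible there; if $i \in E \setminus L$, a putative left-inverse of $t'$ at $v_i'$ would---using surjectivity of the other $\rho_j$'s to back-substitute arbitrary values into the $j \in L$ slots---transport into a left-inverse of $t$ at $v_i$, contradicting $i \notin L$; and if $i \notin E$ (in particular for all of the auxiliary $\vec w$-variables), $t'$ does not essentially depend on that variable, so left-invertibility there is impossible in any nondegenerate sort.

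The one genuinely delicate point is the reduction of each witness $r_i$ to an essentially unary operation, which is where strong abelianness of $\frzflt{\alg{A}}{\tau}$ really pulls its weight; I expect the rest of the verification to be a direct calculation from surjectivity and (more importantly) non-injectivity of the $\rho_i$.
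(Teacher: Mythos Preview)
Your proof is correct, and the route is genuinely different from the paper's. Both arguments begin the same way: observe (via strong abelianness, exactly as in Lemma~\ref{lemma:leftinversesunary}) that any left-inverse \(r_i\) is essentially unary and surjective onto the sort \(S_i\). From there the two proofs diverge.

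The paper proceeds \emph{iteratively}, fixing one left-invertible slot at a time: given \(t\) invertible at \(v_1\) via \(s\), it forms \(\hat t(v_0,v_2,\ldots)=t(s(v_0),v_2,\ldots)\) and proves \(\hat t\) is not left-invertible at \(v_0\) by a self-composition trick (compose \(\hat t\) with itself and derive that a certain term both does and does not depend on \(v_2\)). It then checks separately that non-invertibility at the remaining slots is preserved. Your argument instead modifies \emph{all} left-invertible slots simultaneously, and replaces the self-composition trick with the elementary observation that each \(\rho_i\) must be non-injective (else \(t\) would be essentially unary), so precomposing by \(\rho_i\) already makes the \(i\)th slot non-injective and hence not left-invertible. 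This is shorter and more conceptual; the paper's argument has the minor aesthetic advantage of being purely equational (it never needs to name specific elements \(a\neq a'\) in the algebra), but in a fixed finite algebra that distinction carries no weight. Your treatment of the slots in \(E\setminus L\) (back-substitution via surjectivity of the \(\rho_j\)) is essentially the same as the paper's closing paragraph. One small remark: your caveat about ``nondegenerate sorts'' is never an issue here, since every sort \(\inangle{i,j}\) has at least two elements (otherwise \(d_i\) would not depend on its \(j\)th variable).
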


\begin{proof}
  We show how to take a term depending essentially on \(v_1, v_2\) and invertible at \(v_1\), and produce a new term depending essentially on \(v_2\) and at another variable \(v_0\) (possibly of a different sort than \(v_1\)) and not invertible at \(v_0\). We will then show that if we started with a term which was not left-invertible at \(v_2\), then the new term we construct still has this property.

  Assume that \(t(v_1, v_2, \ldots, v_n) \) depends essentially on \(v_1\) and \(v_2\), and that
  \begin{equation}\label{eq:v1 in ran(s)} s(t(v_1, \ldots, v_n), v_{n+1}, \ldots) = v_1 \end{equation}The same logic used in part \ref{lemitem:xxyx} of Lemma \ref{lemma:leftinversesunary} guarantees that \(s\) cannot depend on any variable except the first, so we will write \(s(x)\) as if it were a unary term.

  Let
  \[ \hat{t}(v_0, v_2, \ldots, v_n) = t(s(v_0),v_2,\ldots, v_n) \]Since \(s\) maps the output sort of \(t\) onto the sort of \(v_1\) in \(\frzflt{\alg{A}}{\tau} \), this new term \(\hat{t}\) must depend essentially on \(v_0\) and \(v_2\).

  \begin{claim}\label{claim:t hat}
    \(\hat{t}\) is not left-invertible at \(v_0\).
  \end{claim}

  Suppose elsewise: let
  \[ r(\hat{t}(v_0,v_2,\ldots,v_n)) = v_0 \]Define another term
  \[ q(v_0,v_2,v_2',\vec{w}) = \hat{t}(\hat{t}(v_0,v_2,\vec{w}),v_2',\vec{w}) \](where \( \vec{w} = v_3, \ldots, v_n \)). Then on the one hand \begin{align*}
    \hat{t}(v_0,v_2,\vec{w}) &= r(\hat{t}(\hat{t}(v_0,v_2,\vec{w}),v_2',\vec{w})) \\
    &= r(q(v_0,v_2,v_2',\vec{w}))
  \end{align*}so \(q\) must depend essentially on \(v_2\). But on the other hand \begin{align*}
    q(v_0,v_2,v_2',\vec{w}) &= \hat{t}(\hat{t}(v_0,v_2,\vec{w}),v_2',\vec{w}) \\
    &= t(s(\hat{t}(v_0,v_2,\vec{w})),v_2',\vec{w}) \\
    &= t(s(t(s(v_0),v_2,\vec{w})),v_2',\vec{w}) \\
    &= t(s(v_0),v_2',\vec{w})
  \end{align*}which does \emph{not} depend on \(v_2\).\pfend{\ref{claim:t hat}}

  Lastly, we must show that if \(\hat{t}\) were left-invertible at \(v_2\) then \(t\) would already have been. This is not hard: suppose
  \[ v_2 = r_2(\hat{t}(v_0,v_2,\ldots, v_n)) = r(t(s(v_0),v_2,\ldots, v_n)) \]Again using the logic of part \ref{lemitem:xxyx} of lemma \ref{lemma:leftinversesunary}, the term \[r(t(s(v_0), v_2, \ldots, v_n)) \]can only depend on \(v_2\); since by Equation \eqref{eq:v1 in ran(s)}, \(v_1 \in \mathrm{ran}(s) \) (considered as elements of the free algebra \( \alg{F}_{\var{V}(\frzflt{\alg{A}}{\tau})}(v_0,v_1,v_2,\ldots) \)), we must have that \(r_2\) inverts \(t\) as well.
\end{proof}

\begin{construction}\label{const:F(X)/theta}
  Let \(X\) be any sorted family of generators for a free algebra \( \alg{F} = \alg{F}(X) \) in \(\var{V}(\frzflt{\alg{A}}{\tau}) \). Let \(f_0\) be an arbitrary fixed element of \(F\), and let \(\alg{F}' = \alg{F}(X \cup \{z\})\), where \(z\) is a new free generator of the same sort as \(f_0\).

  Generate a congruence \( \theta \in \Con{\alg{F}'}\) from all pairs
  \[ \inangle{t(f_0, \vec{u}), t(z,\vec{u})} \]such that \( \vec{u} \in F \) and \(t(v_0, \vec{v}) \) is not left-invertible at \(v_0 \). (Observe that if a term \(g \in F\) occurs as the second member \(t(z, \vec{u})\) of such a pair, by freeness we get that \(t\) does not depend on its first variable, so that the pair is in fact trivial.)
\end{construction}

\begin{lemma}
  Let \( \alg{F}, \alg{F}',\) and \(\theta \) be as in Construction \ref{const:F(X)/theta}. If \(a \in F\) and \( a \equiv_\theta b\), then either \(a = b \) or \( \langle a,b \rangle \) is a generating pair.
\end{lemma}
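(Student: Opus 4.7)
The plan is to exploit the substitution homomorphism $\sigma \colon \alg{F}' \to \alg{F}$ defined by $\sigma(z) = f_0$ and $\sigma|_X = \mathrm{id}_X$. Each generating pair $\langle t(f_0, \vec{u}), t(z, \vec{u}) \rangle$ lies in $\ker \sigma$ (both sides map to $t(f_0, \vec{u})$), so $\theta \subseteq \ker \sigma$. Given $a \in F$ and $a \equiv_\theta b$, the containment forces $\sigma(b) = \sigma(a) = a$. Taking any term representation $b = s(z, \vec{u})$, in which $s(v_0, \vec{v})$ is an $\frzflt{L}{\tau}$-term and $\vec{u}$ is a tuple from $F$ gathering the $X$-generators of $b$, one obtains $a = \sigma(b) = s(f_0, \vec{u})$. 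Thus $\langle a, b \rangle$ already has the syntactic shape required of a generating pair, and the only remaining task is to show that, when $a \neq b$, the term $s$ is not left-invertible at $v_0$.

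Suppose to the contrary that $a \neq b$ while $s$ is left-invertible at $v_0$ via a term $q$ satisfying $q(s(v_0, \vec{v}), \vec{w}) = v_0$ identically. Fix any tuple $\vec{w}_0 \in \alg{F}'$ of appropriate sorts; the unary polynomial $\pi(x) = q(x, \vec{w}_0)$ preserves $\theta$, so applying it to $\langle a, b \rangle$ yields
\[ f_0 \;=\; \pi(s(f_0, \vec{u})) \;\equiv_\theta\; \pi(s(z, \vec{u})) \;=\; z. \]
Consequently the proof reduces to ruling out $\langle f_0, z \rangle \in \theta$.

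The main obstacle is showing $f_0 \not\equiv_\theta z$. The plan here is to exhibit a congruence $\psi \in \Con{\alg{F}'}$ with $\theta \subseteq \psi$ and $\langle f_0, z \rangle \notin \psi$, equivalently, a homomorphism $\phi \colon \alg{F}' \to \alg{B}$ with $\alg{B} \in \var{V}(\frzflt{\alg{A}}{\tau})$ that collapses every generating pair while separating $f_0$ from $z$. Using the strong abelianness of $\alg{B}$ provided by Lemma \ref{lemma:frzflt into A} and the associated characterization $t(\vec{x}) = t(\vec{y}) \iff \vec{x} \mathrel{E^t} \vec{y}$ by equivalence relations on each input sort, the requirement that $\phi$ kill every pair $\langle t(f_0, \vec{u}), t(z, \vec{u}) \rangle$ with $t$ not left-invertible at $v_0$ translates to requiring $\langle \phi(f_0), \phi(z) \rangle \in E_{v_0}^t$ for every such $t$. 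The standing hypothesis that $\frzflt{\alg{A}}{\tau}$ is not essentially unary, combined with Lemma \ref{lemma:essentially binary and not left invertible}, supplies terms with nontrivial first-variable equivalences on the sort of $f_0$, giving enough room to place $\phi(f_0)$ and $\phi(z)$ as a nontrivial pair lying in the intersection of all the relevant $E_{v_0}^t$. Taking $\alg{B}$ to be $\frzflt{\alg{A}}{\tau}$ itself (or a sufficiently large power) and choosing $\phi|_X$ so that $f_0$'s term evaluates to one coordinate of this nontrivial pair while $\phi(z)$ is assigned the other is the delicate technical step, and the place where the entire non-essentially-unary hypothesis is essential.
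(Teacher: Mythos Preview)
Your reduction in steps 1--5 is correct and is a genuinely different (and cleaner) route than the paper's. The paper proves the lemma by a direct chaining argument: it assumes $\langle a,c\rangle$ is already a generating pair, takes one more basic $\theta$-link $c\text{---}b$ (where $\langle c,b\rangle = \langle p(t_2(f_0,\vec u_2)), p(t_2(z,\vec u_2))\rangle$ in one order or the other, with $p$ a polynomial possibly involving $z$), and then uses freeness of $z$ to substitute and exhibit $\langle a,b\rangle$ explicitly as a generating pair. Your use of the retraction $\sigma\colon \alg{F}'\to\alg{F}$ sending $z\mapsto f_0$ bypasses all of this: once $\sigma(b)=a$ and $b=s(z,\vec u)$ with $\vec u\in X\subset F$, the pair $\langle a,b\rangle = \langle s(f_0,\vec u), s(z,\vec u)\rangle$ already has the right shape, and the only issue is whether $s$ is left-invertible at $v_0$.

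The genuine gap is in your step 6. You reduce correctly to showing $f_0 \not\equiv_\theta z$, but your proposed method for this is both incomplete and over-hypothesized. You invoke ``the standing hypothesis that $\frzflt{\alg{A}}{\tau}$ is not essentially unary,'' but this lemma carries no such hypothesis; Construction \ref{const:F(X)/theta} and the lemma are stated for arbitrary $\alg{A}$, $X$, and $f_0$. Moreover, the plan to realize $\phi(f_0)$ and $\phi(z)$ as a nontrivial pair lying in $\bigcap_t E^t_{v_0}$ is left as a ``delicate technical step'' with no actual construction; since $\phi(f_0)$ is \emph{determined} by $\phi|_X$ while the intersection $\bigcap_t E^t_{v_0}$ could in principle be the diagonal on the sort of $f_0$ in $\frzflt{\alg{A}}{\tau}$, it is not clear this can be done at all in a single copy of $\frzflt{\alg{A}}{\tau}$, and you have not argued why a power would help.

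The fix is already in the paper: Proposition \ref{prop:zisolated} shows that $z$ is isolated modulo $\theta$, hence in particular $f_0\not\equiv_\theta z$. Its proof is short, uses only Lemma \ref{lemma:leftinversesunary}, and does \emph{not} rely on the present lemma, so there is no circularity if you reorder and prove it first. Concretely: if $\{z,x\}=\{g(t(f_0,\vec u),z,\vec w),\,g(t(z,\vec u),z,\vec w)\}$ is a basic $\theta$-pair containing $z$, then whichever side equals $z$ exhibits $g(t(v_0,\vec u),v_1,\vec w)$ as right-invertible, forcing it (by Lemma \ref{lemma:leftinversesunary}) to depend only on $v_1$ and equal $v_1$; hence $x=z$. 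Replace your step 6 with this argument and your proof is complete, and arguably more transparent than the paper's chaining.
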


\begin{proof}
  Suppose we have basic nontrivial \(\theta\)-links \( a \text{---} c \text{---} b \), where
  \[ \langle a, c \rangle = \langle t_1(f_0, \vec{u}_1), t_1(z, \vec{u}_1) \rangle \]

  Case 1:
  \[ \langle c, b \rangle = \langle p_2(t_2(f_0, \vec{u}_2)), p_2(t_2(z,\vec{u}_2)) \rangle \]where \(p_2(v_0) = g_2(v_0,z,\vec{w}_2) \in \Pol{1}{\frzflt{\alg{A}}{\tau}} \) for some terms \(g, \vec{w} \in F\).

  We have 
  \[ c = t_1(z,\vec{u}_1) = g_2(t_2(f_0,\vec{u}_2),z, \vec{w}_2) \]and since \(z\) is a free generator, we may substitute any term for \(z\) in the above equation. In particular, \begin{align}
    a = t_1(f_0, \vec{u}_1) &= g_2(t_2(f_0, \vec{u}_2),f_0, \vec{w}_2) \notag\\
    b &= g_2(t_2(z, \vec{u}_2),z, \vec{w}_2) \label{eq:acb->ab}
  \end{align}We will be done with Case 1 if we can establish
  
  \begin{claim}\label{claim:g2 not invertible case1}
    \( g_2(t_2(v_0, \vec{u}_2),v_0,\vec{w}_2) \) is not left-invertible at \(v_0\).
  \end{claim}

  Suppose the contrary, say
  \begin{equation}\label{eq:rg2} r(g_2(t_2(v_0, \vec{u}_2),v_0,\vec{w}_2)) = v_0 \end{equation}By Lemma \ref{lemma:leftinversesunary}, the term
  \[ r(g_2(t_2(v_0, \vec{u}_2),v_1,\vec{w}_2)) \]must depend only on \(v_0\) or \(v_1\), and because of Equation \eqref{eq:rg2} must project to the active variable. Moreover, it cannot be \(v_0\), since then this would be a left-inversion of \(t_2(v_0, \vec{u}_2) \). But if \(v_1\) were the active variable, we would have
  \[ v_1 = r(g_2(t_2(v_0,\vec{u}_2),v_1,\vec{w}_2)) = r(g_2(t_2(f_0,\vec{u}_2),v_1,\vec{w}_2)) = r(t_1(v_1,\vec{u}_1)) \]contradicting our assumption that \(t_1(v_1,\vec{u}_1)\) was not invertible.\pfend{\ref{claim:g2 not invertible case1}}

  Now Equation \eqref{eq:acb->ab} shows that \( \langle a,b \rangle \) is a generating pair.

  \vspace{10pt}
  Case 2: As before, 
  \[ \langle a,c \rangle = \langle t_1(f_0,\vec{u}_1), t_1(z, \vec{u}_1) \rangle \]but now
  \[ \langle c,b \rangle = \langle p_2(t_2(z, \vec{u})), p_2(t_2(f_0,\vec{u}_2)) \rangle \]with \(p_2\) a polynomial as before. Since
  \[ c = t_1(z, \vec{u}_1) = g_2(t_2(z, \vec{u}_2), z, \vec{w}_2) \]and \(z\) is a free generator, the same equation holds under any substitution for \(z\): \begin{align*}
    a = t_1(f_0, \vec{u}_1) &= g_2(t_2(f_0,\vec{u}_2), f_0, \vec{w}_2) \\
    b &= g_2(t_2(f_0,\vec{u}_2), z, \vec{w}_2) 
  \end{align*}As before, the following claim suffices:

  \begin{claim}\label{claim:g2 not invertible case2}
    \(g_2(t_2(f_0, \vec{u}_2), v_0, \vec{w}_2) \) is not left-invertible at \(v_0\).
  \end{claim}

  If it were, so
  \[ r(g_2(t_2(f_0,\vec{u}_2),v_0, \vec{w}_2)) = v_0 \]then the range of this polynomial contains the whole sort of \(f_0\). In particular, 
  \[ r(c) \in \mathrm{ran}\left( r(g_2(t_2(z, \vec{u}_2), \bullet, \vec{w}_2)) \right) \cap \mathrm{ran}\left( r(g_2(t_2(f_0, \vec{u}_2), \bullet, \vec{w}_2)) \right) \]By strong abelianness, the two polynomials in the above equation should be equal, contradicting our original assumptions.\pfend{\ref{claim:g2 not invertible case2}}

\end{proof}

\begin{proposition}\label{prop:zisolated}
  Let \( \alg{F}, \alg{F}',\) and \(\theta \) be as in Construction \ref{const:F(X)/theta}. Then \(z\) is isolated\( \pmod\theta\).
\end{proposition}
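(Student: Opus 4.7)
The plan is to prove, by induction on the length of a chain of basic $\theta$-links, that every $w \in F'$ with $z \equiv_\theta w$ coincides with $z$; it suffices to show that any basic $\theta$-neighbor of $z$ is already $z$. Such a basic link $z \equiv_\theta e$ arises from a generating pair $\langle t(f_0, \vec u), t(z, \vec u)\rangle$ (with $t(v_0, \vec v)$ not left-invertible at $v_0$) and a unary polynomial $p(x) = h(x, \vec y)$ of $\alg F'$ whose parameter tuple $\vec y = \vec y(X, z)$ may mention $z$. Because $z$ is a free generator, every element-equality involving $z$ that holds in $\alg F'$ upgrades to a term identity in $\var V(\frzflt{\alg A}{\tau})$ upon substituting a fresh variable $v$ for $z$; this lever is used throughout.

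Split into two subcases by which element of the generating pair equals $z$. In Subcase A, $z = h(t(f_0, \vec u), \vec y(X, z))$, which yields the term identity $v = h(t(f_0, \vec u(X)), \vec y(X, v))$. This exhibits $h$ as right-invertible at $v$ in the sense of Lemma~\ref{lemma:leftinversesunary}(2), so $h$ is essentially unary. Its lone essential input cannot be the first coordinate (otherwise $v$ would equal a term in $X$ alone, absurd in any non-trivial sort), so $h$ ignores that coordinate entirely and $e = h(t(z, \vec u), \vec y) = h(t(f_0, \vec u), \vec y) = z$.

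In Subcase B, $z = h(t(z, \vec u), \vec y(X, z))$ gives the identity $v = h(t(v, \vec u(X)), \vec y(X, v))$. Setting $H(v_0, v) := h(t(v_0, \vec u(X)), \vec y(X, v))$, this is the diagonal identity $H(v, v) = v$, so Lemma~\ref{lemma:leftinversesunary}(1) makes $H$ essentially unary. If $H$ depends only on $v$, then $H(v_0, v) = v$ identically and $e = H(f_0, z) = z$. The remaining case $H(v_0, v) = v_0$ is the crux: the identity $h(t(v_0, \vec u(X)), \vec y(X, v)) = v_0$ is itself a right-inversion of $h$, its essential variable cannot lie in the $\vec y$-slot (that would make $v_0$ depend on nothing), and so $h(x_1, \ldots, x_m) = \psi(x_1)$ with $\psi(t(v_0, \vec u(X))) = v_0$ holding identically.

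The main obstacle is bridging this conditional left-inversion (valid only at the parameter tuple $\vec v = \vec u(X)$) to a full identity $\psi(t(v_0, \vec v)) = v_0$, which would contradict the non-left-invertibility of $t$ at $v_0$ built into the generating pair. Strong abelianness of the variety achieves exactly this: for any $\vec v_*$ and any $X_0$, set $a := \psi(t(v_0, \vec v_*))$; then $\psi(t(a, \vec u(X_0))) = a$, so $\psi(t(v_0, \vec v_*)) = \psi(t(a, \vec u(X_0)))$, whereupon the strong term condition $\TC{\top}{\top}{\bot}$ applied to the term $(x, \vec y) \mapsto \psi(t(x, \vec y))$ forces $\psi(t(v_0, \vec v_*)) = \psi(t(v_0, \vec u(X_0))) = v_0$. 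This delivers the required contradiction and eliminates the hard case of Subcase B, completing the induction.
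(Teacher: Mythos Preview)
Your proof is correct and follows the same two-case split as the paper's: promote the equality at $z$ to an identity by freeness of $z$, invoke Lemma~\ref{lemma:leftinversesunary} to force the relevant composite term to be essentially unary, and rule out the case where the essential variable sits in the $t$-slot. In fact you are more careful than the paper on the hard branch of Subcase~B: where the paper simply writes ``since then $t$ would be left-invertible,'' you rightly flag that the identity $\psi(t(v_0,\vec u(X)))=v_0$ is only a \emph{conditional} left-inversion (at the particular parameters $\vec u(X)$), and you supply the strong-abelianness argument needed to upgrade it to $\psi(t(v_0,\vec v))=v_0$ for arbitrary $\vec v$. One small point: your $H$ is really a term in $v_0,v$ \emph{and} the $X$-variables, so after Lemma~\ref{lemma:leftinversesunary}(1) you should also dismiss the possibility that the lone essential variable is some $X_i$---but that case is immediate, since $H(v,v,\vec X)=v$ would force $v$ to equal a term in $X_i$ alone.
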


\begin{proof}
  Let \( \{ z, x \} = \{ p(t(f_0, \vec{u})), p(t(z,\vec{u})) \} \) be a basic \(\theta\)-pair, where \( p(v_0) = g(v_0, z, \vec{w}) \) as in the previous lemma.

  First suppose \[ z = p(t(z, \vec{u}))  = g(t(z,\vec{u}),z,\vec{w})\]Then by Lemma \ref{lemma:leftinversesunary}, \(( g(t(v_0, \vec{u}),v_1, \vec{w}) \) depends only on one variable, either \(v_0\) or \(v_1\). Moreover, \(v_0\) is not a possibility, since then \(t\) would be left-invertible. We conclude that \(g(t(v_0, \vec{u}),v_1, \vec{w}) = v_1 \) throughout \( \HSP{\frzflt{\alg{A}}{\tau}} \).

  Next suppose \[ z = p(t(f_0, \vec{u})) = g(t(f_0, \vec{u}),z, \vec{w}) \]Then \(g(t(v_0, \vec{u}),v_1,\vec{w}) \) is right-invertible; invoking Lemma \ref{lemma:leftinversesunary} again, this term is essentially unary, and since \( f_0 \in F\) and \(z\) is not, the dependency must be on \(v_1\); hence 
  \[ g(t(v_0,\vec{u}),v_1,\vec{w}) = v_1 \]is valid in \( \HSP{\frzflt{\alg{A}}{\tau}} \).

  In either case, we conclude that \(z = x\).
\end{proof}

The content of the previous two lemmas is that, for \( \alg{F}, \alg{F}', f_0\), and \(\theta\) defined in this way, and for \( \alg{C} = \alg{F}'/\theta\), we have that \(\alg{F} \) is an isomorphic substructure of \( \alg{C} \), and \( f_0 \) and \( z \) are indistinguishable by the action of non-left-invertible terms \(t( \bullet, \vec{u}) \) taken from \(F\).

Recall that since \( \frzflt{\alg{A}}{\tau} \) is strongly abelian, there is an upper bound on the essential arity of terms over this algebra. (For example, \( |A| \cdot \max_i K_i \) would work.) Let \(T\) be a finite set of \( \frzflt{L}{\tau} \) terms such that every term operation of \( \frzflt{\alg{A}}{\tau} \) is given (up to renaming of variables) by one of the terms in \(T\).

For each sort \( \langle i, j \rangle \), let \( N_{\langle i,j \rangle} \subset T \) be the set of all terms \(t(v_0, v_1, \ldots) \) such that \(v_0\) has sort \( \langle i, j \rangle \) and \(t\) is not left-invertible at \(v_0\). Then the relations
\[ a \propto_{\langle i,j \rangle} b \iff \bigwedge_{t \in N_{\langle i,j \rangle}} \forall \vec{u} \; t(a,\vec{u}) = t(b,\vec{u}) \]together comprise a definable equivalence relation on any \( \alg{M} \in \HSP{\frzflt{\alg{A}}{\tau}} \). We will usually write \( a \propto b \) instead of \( a \propto_{\langle i, j \rangle} b\).

It is clear from the definition that \( \mathbf{a} \propto \mathbf{b} \) in a product \( \prod_{x \in X} \alg{B}_x \) if and only if \(a^x \propto b^x\) in each stalk.

\begin{proposition}\label{prop:RinvPreservesPropto}
  If \(s(v_0, v_1, \ldots,v_n) \) is a right-invertible term depending only on \(v_0\), then for any \( \alg{M} \in \HSP{\frzflt{\alg{A}}{\tau}} \), any \( a \propto b \in M \), and any \( x_1,  \ldots, x_n \in M\) of the appropriate sorts, \( s(a,x_1, \ldots x_n) \propto s(b, x_1, \ldots x_n) \).
\end{proposition}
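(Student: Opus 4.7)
The plan is to reduce this to the defining property of \(\propto\) by verifying that a suitable composite term is itself not left-invertible at the variable corresponding to \(a,b\). Fix \(a \propto_{\langle i,j\rangle} b\) in \(\alg{M}\), fix elements \(x_1,\ldots,x_n\) of the appropriate sorts, and let \(\langle i',j'\rangle\) denote the output sort of \(s\). To establish \(s(a,\vec{x}) \propto_{\langle i',j'\rangle} s(b,\vec{x})\), I fix an arbitrary representative \(t \in N_{\langle i',j'\rangle}\) and arbitrary parameters \(\vec{u}\), and aim to show \(t(s(a,\vec{x}),\vec{u}) = t(s(b,\vec{x}),\vec{u})\). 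The natural gadget is the composite \(T(v_0,\vec{v},\vec{u}) := t(s(v_0,\vec{v}),\vec{u})\), whose first variable \(v_0\) carries sort \(\langle i,j\rangle\). If I can show \(T\) is not left-invertible at \(v_0\), then (up to renaming) \(T\) lies in \(N_{\langle i,j\rangle}\), so the conjunction defining \(a \propto_{\langle i,j\rangle} b\) applied to \(T\) with \(\vec{v}=\vec{x}\) and the given \(\vec{u}\) immediately yields the required equation.

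The entire substantive content is thus the claim that \(T\) is not left-invertible at \(v_0\); I argue by contradiction. Suppose some term \(r\) satisfies \(r(T(v_0,\vec{v},\vec{u}),\vec{y}) = v_0\) identically in \(\HSP{\frzflt{\alg{A}}{\tau}}\). Right-invertibility of \(s\) supplies terms \(\sigma_0,\sigma_1,\ldots,\sigma_n\) in variables \(\vec{w}\) with \(s(\sigma_0(\vec{w}),\sigma_1(\vec{w}),\ldots,\sigma_n(\vec{w})) = w_0\); since \(s\) depends only on its first argument, this identity in fact forces \(s(\sigma_0(\vec{w}),\cdot,\ldots,\cdot) = w_0\) regardless of what is substituted in the remaining slots. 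Substituting \(v_0 \mapsto \sigma_0(\vec{w})\) in the hypothetical inversion and collapsing \(s(\sigma_0(\vec{w}),\vec{v}) = w_0\), I obtain the identity \(r(t(w_0,\vec{u}),\vec{y}) = \sigma_0(\vec{w})\). Applying \(s\) in its first slot to both sides (with arbitrary fillers in the remaining slots, which \(s\) ignores) then gives
\[ s\bigl(r(t(w_0,\vec{u}),\vec{y}),\,\ast,\ldots,\ast\bigr) \;=\; s(\sigma_0(\vec{w}),\ast,\ldots,\ast) \;=\; w_0, \]
which exhibits \(s\!\circ\!r\!\circ\!t\) as a left-inverse of \(t\) at \(w_0\), contradicting \(t \in N_{\langle i',j'\rangle}\).

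I do not expect any real obstacle beyond careful bookkeeping of sorts and variable names, because every manipulation takes place symbolically in the free algebra of \(\var{V}(\frzflt{\alg{A}}{\tau})\). The key lever is that \(s\) depends only on \(v_0\): this lets me both simplify \(s(\sigma_0(\vec{w}),\vec{v})\) after the substitution and freely fill in the ignored slots when composing with \(s\) at the end, which is exactly what converts right-invertibility of \(s\) together with non-left-invertibility of \(t\) into non-left-invertibility of the composite \(T\).
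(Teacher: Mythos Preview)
Your proof is correct and follows essentially the same route as the paper: both reduce the claim to showing the composite \(t(s(v_0,\vec{v}),\vec{u})\) is not left-invertible at \(v_0\), then derive a contradiction by substituting the right-inverse of \(s\) and rebuilding a left-inverse for \(t\). Your bookkeeping of the two sorts \(\langle i,j\rangle\) and \(\langle i',j'\rangle\) is in fact slightly cleaner than the paper's, which conflates them notationally.
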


\begin{proof}
  Say the sort of \(v_0\) is \(\langle i,j \rangle \). Let
  \[s(t_0(y, \vec{z}),v_1, \ldots, v_n) = y \]and let \(t(v_0, \ldots, v_\ell) \in N_{\langle i,j \rangle}\). It suffices to show that
  \[t(s(v_0,\ldots, v_n),v_{n+1},\ldots, v_{n+\ell}) \in N_{\langle i,j \rangle}\]too.

  Suppose otherwise: then for some essentially unary term \(r(v_0, \ldots) \) we have
  \[ r(t(s(v_0,\ldots, v_n),v_{n+1},\ldots, v_{n+\ell})) = v_0 \]Then
  \begin{align*} t_0(y, \vec{z}) &= r(t(s(t_0(y,\vec{z}),\ldots, v_n),v_{n+1},\ldots, v_{n+\ell})) \\ 
    &= r(t(y,v_{n+1},\ldots, v_{n+\ell})) \\
    y &= s(t_0(y, \vec{z}),v_1, \ldots, v_n) \\
    &= s(r(t(y,v_{n+1},\ldots, v_{n+\ell})),v_1, \ldots, v_n)
  \end{align*}contradicting our assumption that \(t\) was not left-invertible at its first variable.
\end{proof}

We are ready for:
  \newcounter{realthm}
  \setcounter{realthm}{\value{theorem}}
  \newcounter{realclaim}

\begin{proof}[Proof of Theorem \ref{thm:bipartite into frzflt}]
  \setcounter{theorem}{\value{bipartitethm}}
  Let \( \alg{A} \) be a finite algebra with strongly solvable radical \( \tau \) such that every strongly solvable congruence in \( \HSP{\alg{A}} \) is strongly abelian, and suppose that \( \frzflt{\alg{A}}{\tau} \) is not essentially unary. By Lemma \ref{lemma:essentially binary and not left invertible}, we may fix a term \(q(v_1,\ldots,v_\ell)\) depending essentially on \(v_1,v_2\) but not left-invertible at either. Let \(X\) be a sorted collection of free generators: one \(x_{\inangle{i,j}} \) for each sort \( \inangle{i,j} \), as well as two generators \( a_0, a_1\) of the sort of \(v_1\) and two more \(b_0,b_1\) of the sort of \(v_2\). Let
  \[ v_1 * v_2 = q(v_1,v_2,x_{\inangle{i_3,j_3}},\ldots,x_{\inangle{i_\ell,j_\ell}}) \in \Pol{2}{\alg{F}(X)} \]and define elements
  \begin{align*}
  0 &= a_0 * b_0 \\
  1 &= a_0 * b_1 \\
  2 &= a_1 * b_0 \\
  3 &= a_1 * b_1
\end{align*}(These elements are all distinct since \(*\) depends on both variables.) Let \( \langle i_0, j_0 \rangle \) be the type of these four elements, and let \( \alg{C} = \alg{F}'/\theta \), where \( \alg{F}' \) and \(\theta \) are built according to Construction \ref{const:F(X)/theta}, with \(0\) playing the role of \(f_0\). As we remarked before, \( \alg{F} \leq \alg{C} \).

  We first observe that, by construction, for any \( t(v_0, \ldots, v_n) \in N_{\inangle{i_0,j_0}} \) and any \( \vec{u} \in F\),
  \[ \alg{C} \models t(0,\vec{u}) = t(z,\vec{u}) \]Since \( \alg{C} \) is strongly abelian, it follows that the polynomials \(t(0, \bullet) \) and \(t(z,\bullet)\) are equal: that is,
  \[ \alg{C} \models 0 \propto z. \]

  \begin{claim}\label{claim:0123 not propto}
    \( \{0,1,2,3\} \) are pairwise \(\propto\)-inequivalent.
  \end{claim}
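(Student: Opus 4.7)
}
My plan is to argue by contradiction. Suppose that some two of the elements $0,1,2,3 \in \alg{C}$ are $\propto_{\inangle{i_0,j_0}}$-equivalent. The construction has obvious symmetries: the automorphism of $\alg{F}(X)$ induced by swapping the free generators $a_0 \leftrightarrow a_1$ permutes the four elements as $0 \leftrightarrow 2,\ 1 \leftrightarrow 3$, and the one induced by $b_0 \leftrightarrow b_1$ permutes them as $0 \leftrightarrow 1,\ 2 \leftrightarrow 3$. Consequently it suffices to derive a contradiction in three essentially different cases: (I) $0 \propto 1$ (difference in the $b$-coordinate of $q$), (II) $0 \propto 2$ (difference in the $a$-coordinate), and (III) $0 \propto 3$ (difference in both).

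In each case the plan is to combine the assumed $\propto$-equivalence with the preceding observation $0 \propto z$ and with the transitivity of $\propto$ (which is immediate from its definition as a conjunction of universally quantified equations). For example, in Case (I) this yields $z \propto 1$ in $\alg{C}$. Since $\alg{F}$ is an isomorphic substructure of $\alg{C}$ (from the two preceding lemmas), and since $\propto$-witnesses can be tested using parameter tuples inside $F$, the task reduces to exhibiting a term $t(v_0, v_1, \ldots, v_n)$ of $\frzflt{L}{\tau}$ which is not left-invertible at $v_0$ (of sort $\inangle{i_0, j_0}$), together with a tuple $\vec{u} \in F$, such that $t(z,\vec{u}) \ne t(1,\vec{u})$ in $\alg{C}$.

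The reason such a distinguishing term must exist is the following contrast: $1 = q(a_0,b_1,\vec{x})$ is explicitly built from the free generators $a_0, b_1, \vec{x}$ via $q$, while $z$ is by Proposition \ref{prop:zisolated} isolated from $\alg{F}$ in $\alg{C}$. Strong abelianness of $\alg{C}$ upgrades any finite set of agreements to an equality of polynomials, so if one had $t(z,\vec{u}) = t(1,\vec{u})$ for all non-left-invertible $t$ and all $\vec{u} \in F$, one could rearrange, invoking the freeness of $z$ as a generator of $\alg{F}'$, into a universally valid term identity over the variety. Combined with the non-left-invertibility of $q$ at $v_2$, this identity can be massaged into an explicit left-inverse expression for $q$ at $v_2$ — contradicting our choice of $q$ in Lemma \ref{lemma:essentially binary and not left invertible}. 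Cases (II) and (III) proceed identically, invoking non-left-invertibility of $q$ at $v_1$ (respectively at both).

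The main obstacle is the explicit construction of the distinguishing term $t$. By the proposition characterizing $\frzflt{L}{\tau}$-terms as basic operations with possible identification of variables of the same sort, $t$ must arise from a single $\tau$-boxmap of $\alg{A}$ having $C_{i_0}$ among its input classes; at minimum the decomposition operator $d_{i_0}$ supplies such an input, but its coordinate projections are left-invertible and hence not in $N_{\inangle{i_0,j_0}}$. The actual construction therefore combines $d_{i_0}$ with $q$ so as to produce a term whose $v_0$-dependence routes through the non-left-invertible $v_2$-input of $q$, while the choice of $\vec{u}$ (involving $a_0, b_0, b_1, \vec{x}$, and in Case (I) substitution of $z$ for the output slot) is dictated by the need to obtain the inverting identity on passage back to $\alg{F}'$. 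This bookkeeping is what makes the argument delicate but carries no further conceptual novelty beyond the symmetry reduction and the strong-abelian upgrade described above.
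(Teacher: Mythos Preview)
Your proposal has a genuine gap. The detour through $z$ is circular: by construction of $\theta$, for every non-left-invertible $t$ and every $\vec{u}\in F$ one already has $t(z,\vec{u})=t(0,\vec{u})$ in $\alg{C}$; so finding $t,\vec{u}$ with $t(z,\vec{u})\neq t(1,\vec{u})$ is literally the same problem as finding $t,\vec{u}$ with $t(0,\vec{u})\neq t(1,\vec{u})$, i.e.\ proving $0\not\propto 1$ directly. The passage through $z$ buys nothing. Your escape route (``massage into a left-inverse of $q$ at $v_2$'') is only asserted, not constructed, and I do not see how strong abelianness alone converts the hypothesis $0\propto 1$ into a left inverse for $q$: there is no reason the non-left-invertible terms of sort $\inangle{i_0,j_0}$ should factor through $q$ in any exploitable way, and your sketch of combining $d_{i_0}$ with $q$ produces terms whose first-variable input is in the sort of $a$ or $b$, not $\inangle{i_0,j_0}$.

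The paper's argument is entirely different, and the idea you are missing is the abelian term condition. One works in $\alg{F}$ (the hypothesis $0\propto 1$ restricts from $\alg{C}$ to the substructure $\alg{F}$) and sets $\beta=\Cg{\alg{F}}{0}{1}$. The key subclaim is that $3$ is isolated modulo $\beta$: if $3=a_1*b_1\in\{g(0,\vec{u}),g(1,\vec{u})\}$ for some term $g$ and parameters $\vec{u}\in F$, then the free generator $a_0$ occurs on one side of this equation but not the other, so by freeness $g$ is not left-invertible at its first variable; the hypothesis $0\propto 1$ then forces $g(0,\vec{u})=g(1,\vec{u})$. Hence $2\not\equiv_\beta 3$, and the matrix
\[
\begin{matrix} a_0*b_0 & a_0*b_1 \\ a_1*b_0 & a_1*b_1 \end{matrix}
\]
has its top row $\beta$-collapsed but not its bottom row. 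This violates the term condition over $\beta$, contradicting the fact that $\HSP{\frzflt{\alg{A}}{\tau}}$ is an abelian variety. Your symmetry reduction is fine (the paper just says the remaining cases are similar), but that is not where the content lies.
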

  
  We will show that \( 0 \not \propto 1 \); the remaining cases are similar.

  Suppose for the sake of contradiction that \( 0 \propto 1\). Observe that \(0 \propto 1\) in \( \alg{F} \) also.
  
  \begin{subclaim}\label{subclaim:3 isolated}Under the hypothesis that \( 0 \propto 1 \), \(3\) is isolated modulo \( \beta = \Cg{\alg{F}}{0}{1} \).
  \end{subclaim}
  
  To see this, let \( 3 \in \{g(0, \vec{u}),g(1,\vec{u})\} \) for some term \(g\). Then we have
  \[ 3 = a_1 * b_1 = g(a_0*b, \vec{u}) \]for \(b \) either \(b_0\) or \(b_1\); since \(a_0\) appears on the right but not the right and \( \alg{F} \) is free,
  \[ g(a_0*b,\vec{u}) = g(a_1*b,\vec{u}) \]Thus the polynomial is not injective on \(\inangle{i_0,j_0}\), so \(g(v_0,\vec{u})\) cannot be left-invertible, and hence belongs to \(N_{\inangle{i_0,j_0}}\).

  Our assumption that \(0 \propto 1\) now forces \(g(0,\vec{u})\) to be equal to \(g(1,\vec{u})\).\hspace{\stretch{1}}\(\dashv_{\ref{subclaim:3 isolated}}\)

  In particular, \( 2 \not \equiv_\beta 3 \). But then
  \begin{align*}
    a_0 * b_0 &\equiv_\beta a_0 * b_1 \\ &\text{but} \\
    a_1 * b_0 &\not\equiv_\beta a_1 * b_1
  \end{align*}so \( \beta \) is not abelian. This is a contradiction; the remaining five cases are proved analogously.\hspace{\stretch{1}}\(\dashv_{\ref{claim:0123 not propto}}\)

  Our plan is to semantically interpret the class of bipartite graphs without isolated vertices into \( \HSP{\frzflt{\alg{A}}{\tau}} \). (It is well known that the theory of bipartite graphs is computably inseparable from the set of sentences false in some finite bipartite graph.) Our strategy will be to define an algebra \( \alg{D}(\mathbb{G}) \) for each graph \( \mathbb{G} \), and then to show that certain relations are uniformly first-order definable in these algebras. (Here ``uniformly'' means that the respective relations are defined via the same first-order formulas for all \( \alg{D}(\mathbb{G}) \); the subsets defined by these formulas in algebras in \( \HSP{\frzflt{\alg{A}}{\tau}} \) but not of the form \( \alg{D}(\mathbb{G}) \) may be quite strange and bear no resemblance to the relations we intend.)

  For us, a bipartite graph will be a two-sorted structure \( \mathbb{G} = \inangle{R^{\mathbb{G}},B^{\mathbb{G}} \; ; \; E^{\mathbb{G}}} \), where \(E\) has type signature \(\inangle{R,B}\).

  \setcounter{theorem}{\value{realthm}}
  \setcounter{realclaim}{\value{claim}}
  \begin{construction}
    Let \( \mathbb{G} \) be a bipartite graph. We define a subpower \( \alg{D} = \alg{D}(\mathbb{G}) \leq \alg{C}^\Gamma \) as follows: the index set \( \Gamma = R^{\mathbb{G}} \sqcup B^{\mathbb{G}} \sqcup \{ \clubsuit, \spadesuit\} \), and \(\alg{D} \) is generated by all points
    \begin{align*} 
      \iota_x = x_{|\Gamma} & \qquad (x \in X) \\
      \chi_v = {a_1}_{|v} \oplus {a_0}_{|\mathrm{else}} & \qquad (v \in R^{\mathbb{G}}) \\
      \chi_v = {b_1}_{|v} \oplus {b_0}_{|\mathrm{else}} & \qquad (v \in B^{\mathbb{G}}) \\
      \chi_{e,\clubsuit} = 2_{|v} \oplus 1_{|w} \oplus z_{|\clubsuit} \oplus 0_{|\mathrm{else}} & \qquad (e = \langle v, w \rangle \in E^{\mathbb{G}})\\
      \chi_{e,\spadesuit} = 2_{|v} \oplus 1_{|w} \oplus z_{|\spadesuit} \oplus 0_{|\mathrm{else}} & \qquad (e = \langle v, w \rangle \in E^{\mathbb{G}})
    \end{align*}We let
\begin{align*} 
  \chi_R &= \{ \chi_v : v \in R^{\mathbb{G}} \} \\
  \chi_B &= \{ \chi_v : v \in G^{\mathbb{G}} \} \\
  \chi_E &= \{ \chi_{e,\clubsuit}, \chi_{e,\spadesuit} : e \in E^{\mathbb{G}} \}
\end{align*}
  \end{construction}
  \setcounter{realthm}{\value{theorem}}
  \setcounter{theorem}{\value{bipartitethm}}
  \setcounter{claim}{\value{realclaim}}

  By abuse of notation, \(X\) will still denote the set of diagonal generators \( \iota_x \). We will suppose that we have constant symbols for all the \( \iota_x \), so that \(X\) (and hence \(F\), the subalgebra generated by \(X\)) is a uniformly definable subset of \( D \).

  Note that \(\alg{D} \) is not quite a diagonal subpower; it contains all diagonal elements from \( \alg{F} \), but none of those from \( \alg{C}\setminus \alg{F} \).

  \begin{claim}\label{claim:generators initial}
    If for some term \(t\) and elements \(\vec{\point{x}}\) of \( \alg{D} \), \(t(\point{x}_1, \ldots, \point{x}_n)\) is equal to one of the non-diagonal generators, then \(t\) is right-invertible (and hence essentially unary).
  \end{claim}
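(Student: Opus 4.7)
The plan is to handle each type of non-diagonal generator \(\chi\) by projecting the equation \(t(\point{x}_1, \ldots, \point{x}_n) = \chi\) onto a distinguished coordinate \(\gamma\) at which \(\chi^\gamma\) is a free generator of the appropriate free algebra, and then extracting a right-inverse for \(t\) by freeness. For \(\chi = \chi_v\) we will take \(\gamma = v\), so that \(\chi^v \in \{a_1, b_1\}\); for \(\chi = \chi_{e,\clubsuit}\) we will take \(\gamma = \clubsuit\) and for \(\chi = \chi_{e,\spadesuit}\) we will take \(\gamma = \spadesuit\), in both of which cases \(\chi^\gamma = z\).

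Writing each \(\point{x}_k\) as some term \(\sigma_k\) applied to the generators of \(\alg{D}\), projection of the given equation to coordinate \(\gamma\) will yield the equation \(t(\sigma_1(\vec{g}^\gamma), \ldots, \sigma_n(\vec{g}^\gamma)) = \chi^\gamma\) in \(\alg{C}\), where \(\vec{g}^\gamma\) denotes the tuple of projections of the generators of \(\alg{D}\) at coordinate \(\gamma\). In the vertex case, every generator of \(\alg{D}\) projects into \(F\) at coord \(v\), so the equation actually holds in the free algebra \(\alg{F}\), with \(a_1\) (or \(b_1\)) one of its free generators. In the edge case, the equation holds in \(\alg{C} = \alg{F}'/\theta\) with right-hand side \(z\); by Proposition \ref{prop:zisolated}, the \(\theta\)-class of \(z\) is the singleton \(\{z\}\), so the equation lifts to \(\alg{F}'\) after we choose preimages of \(\vec{g}^\gamma\) in \(X \cup \{z\}\). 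Either way we will obtain, in a free algebra, an equation \(t(\sigma_1, \ldots, \sigma_n) = \alpha\) whose right-hand side \(\alpha\) is a free generator.

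The last step is to exploit freeness: such an equation is a specialization of an identity of the variety \(\HSP{\frzflt{\alg{A}}{\tau}}\), and renaming \(\alpha\) throughout to a new variable \(v_0\) will yield the identity
\(t(s_1(v_0, \vec{u}), \ldots, s_n(v_0, \vec{u})) = v_0\)
in the variety, which is precisely the definition of right-invertibility of \(t\); essential unarity then follows from Lemma \ref{lemma:leftinversesunary}. The main technical point is the edge case, where the lift from \(\alg{C}\) to \(\alg{F}'\) relies on Proposition \ref{prop:zisolated}; one also has to verify that \(\alpha\) really occurs in at least one of the \(\sigma_k\), since otherwise the renamed identity would have a left-hand side independent of \(v_0\), which is impossible in a nontrivial variety. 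This latter condition is equivalent (in the edge case) to saying that at least one \(\point{x}_k\) must have been built using some \(\chi_{e',\star}\) with \(\star = \gamma\), and in the vertex case to saying that some \(\point{x}_k\) involves either \(\iota_{a_1}\) or \(\chi_v\).
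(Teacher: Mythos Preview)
Your plan is correct and is essentially the paper's own argument: project to the coordinate where the generator takes a free-generator value ($v$ for $\chi_v$, $\clubsuit$ or $\spadesuit$ for the edge generators), use freeness of $\alg{F}$ in the vertex case, and invoke Proposition~\ref{prop:zisolated} to lift from $\alg{C}$ to $\alg{F}'$ in the edge case. One small slip: the projected values $\vec{g}^\gamma$ of the generators of $\alg{D}$ need not lie in $X\cup\{z\}$ (for instance $\chi_{e,\spadesuit}^{\clubsuit}=0=a_0*b_0\in F\setminus X$), so your lifts should be taken in $F\cup\{z\}\subset F'$ rather than in $X\cup\{z\}$; alternatively, as the paper does, you can simply lift each $x_k^\gamma$ directly to some $y_k\in\alg{F}'$ without passing through the $\sigma_k$. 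Your final paragraph about whether $\alpha$ occurs among the $\sigma_k$ is correct but superfluous: once you have $\alpha = t(\ldots)$ in a free algebra with $\alpha$ a free generator, freeness immediately yields the identity $v_0 = t(s_1(\vec v),\ldots,s_n(\vec v))$ regardless of whether $v_0$ actually appears on the right, and that is already the definition of right-invertibility.
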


  Suppose first that \(t(\point{x}_1, \ldots, \point{x}_n) = \chi_v  \in \chi_R\). Then
  \[ a_1 = \chi_v^v = t(x_1^v, \ldots, x_n^v) \]and all the elements in this equality belong to \(F\). Since \(\alg{F}\) is free, this is precisely the statement that \(t\) is right-invertible.

  The case where \(v\) is a blue vertex is the same.

  Next let \( t(\point{x}_1, \ldots, \point{x}_n) = \chi_{e,\clubsuit}\). Then
  \[ z = (\chi_{e,\clubsuit})^\clubsuit = t(x_1^\clubsuit, \ldots, x_n^\clubsuit) \]so that in \( \alg{F}'\), \(z \equiv_\theta t(y_1, \ldots, y_n) \) for \(y_k / \theta = x_k^\clubsuit \). By Proposition \ref{prop:zisolated}, \[t(y_1, \ldots, y_k) = z\]once again showing that \(t\) is right-invertible.\pfend{\ref{claim:generators initial}}

  The set \( \NRINV \subset D \) of all \(x \) such that
  \begin{quotation}
    \( x \) is neither diagonal nor in the image of any term which is not right-invertible.
  \end{quotation}is uniformly first-order, and we have just shown that every off-diagonal generator lies in this set. While it would be nice if this were actually the set of off-diagonal generators, this might be too much to ask.

  To get around this, define \( x \leq y \) in \( \alg{D} \) if for some essentially unary term \(t(v_0, \ldots) \) we have \(x = t^{\alg{D}}(y, \ldots)\). Then \( \leq \) is a definable preorder, and its associated partial order \( \sim \) is of course definable too, as is the property of being in a maximal \( \sim \)-equivalence class.

  \begin{claim}\label{claim:sim separates NRINV}
    The map \( \chi \mapsto \chi/\sim \) is a bijection of off-diagonal generators to \( \leq \)-maximal \( \sim \)-classes containing a member of \( \NRINV \).
  \end{claim}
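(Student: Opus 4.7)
The plan is to verify four things, which together establish the bijection: (a) every off-diagonal generator lies in $\NRINV$, so $\chi/\sim$ is in the target set; (b) distinct off-diagonal generators yield distinct $\sim$-classes; (c) $\chi/\sim$ is $\leq$-maximal among those $\sim$-classes that meet $\NRINV$; and (d) every such maximal class has the form $\chi/\sim$. Ingredient (a) is exactly Claim \ref{claim:generators initial}.

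For injectivity (b), I would suppose $\chi \ne \chi'$ are two off-diagonal generators with $\chi \leq \chi'$, say $\chi = t(\chi', \vec u)$ for some essentially unary $t$. Claim \ref{claim:generators initial} forces $t$ to be right-invertible, and essential unariness lets one ignore $\vec u$, so $\chi^i = t\bigl((\chi')^i\bigr)$ in $\alg{C}$ at every $i \in \Gamma$. I would then split on the types of $\chi$ and $\chi'$ and chase a coordinate-level contradiction: two vertex generators of the same colour give $t(a_0) = a_0$ at $i \notin \{v, v'\}$ and $t(a_0) = a_1$ at $i = v$; two vertex generators of opposite colours do the same with $b_0$; two edge generators on distinct edges give $t(0) = 0$ at a spare coordinate (for instance the unused suit) and $t(0) \in \{1, 2\}$ at an endpoint of one edge lying outside the other; and two edge generators on the same edge with different suits give $z = t(0)$ in $\alg{C}$, which by Proposition \ref{prop:zisolated} (isolation of $z$ modulo $\theta$) promotes to $t(0) = z$ in $\alg{F}'$, an equation impossible between an $F$-element and a free generator of $\alg{F}'$ outside $F$.

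The case I expect to be the main obstacle is the mixed case: a vertex generator $\chi_v$ paired with an edge generator $\chi_{e,\clubsuit}$. At $i = \clubsuit$ one extracts $t(z) = a_0$ in $\alg{C}$, so $\langle a_0, t(z)\rangle \in \theta$. Since $a_0 \in F$ but (by essential unariness of $t$) $t(z) \notin F$, the lemma on $F$-elements congruent modulo $\theta$ forces $\langle a_0, t(z)\rangle$ to be a basic generating pair $\langle t'(0, \vec u), t'(z, \vec u)\rangle$ with $t'$ not left-invertible at $v_0$ and $\vec u \in F$. Expanding $0 = q(a_0, b_0, \vec x)$ and invoking the freeness of $\alg{F}$, the $\alg{F}$-equation $a_0 = t'(q(a_0, b_0, \vec x), \vec u)$ becomes a variety-identity of the form $y_1 = t'(q(y_1, y_2, \vec y'), \vec u(\vec y))$, exhibiting $q$ as left-invertible at $v_1$ in direct contradiction to the choice of $q$ afforded by Lemma \ref{lemma:essentially binary and not left invertible}.

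Finally, (c) and (d) fall out of (b) together with the structural observation made in the discussion just before the claim: any $y \in \NRINV$ can be written $y = r(\chi)$ for some essentially unary right-invertible term $r$ and some off-diagonal generator $\chi$. This immediately yields $y \leq \chi$, which supplies surjectivity (d). For maximality (c), if $\chi \leq y$ with $y \in \NRINV$, writing $y = r(\chi')$ gives $\chi = t(y) = (t \circ r)(\chi')$, so $\chi \leq \chi'$; by (b) $\chi = \chi'$, whence $y = r(\chi) \leq \chi$ and $y \sim \chi$.
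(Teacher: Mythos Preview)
Your overall strategy matches the paper's: establish (a)--(d) via Claim~\ref{claim:generators initial} for (a), a case analysis on pairs of off-diagonal generators for (b), and the observation that every element of \(\NRINV\) lies \(\leq\)-below some generator for (c) and (d). Parts (a), (c), (d) and most cases of (b) are fine.

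Two issues arise in your mixed vertex/edge case. First, you only argue \(\chi_v \leq \chi_{e,\clubsuit}\); the reverse direction \(\chi_{e,\clubsuit} \leq \chi_v\) is a separate (and easier) case: at coordinate \(\clubsuit\) one gets \(z = t(a_0)\) in \(\alg{C}\), but \(t(a_0) \in F\) and \(z\) is isolated modulo \(\theta\), forcing \(z \in F\), a contradiction. Second, your final deduction that \(q\) is left-invertible at \(v_1\) is not immediate: the identity \(y_1 = t'(q(y_1,y_2,\vec{y}'),\vec{u}(\vec{y}))\) is not literally an instance of the paper's definition of left-invertibility, since \(\vec{u}(\vec{y})\) may involve \(y_1\), whereas the definition requires the auxiliary variables of the inverting term to be fresh. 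The gap is closable: this identity exhibits \(t'\) as right-invertible, hence essentially unary by Lemma~\ref{lemma:leftinversesunary}(2); since the generating pair \(\langle t'(0,\vec{u}),t'(z,\vec{u})\rangle\) is nontrivial, \(t'\) must depend on \(v_0\), whereupon the parameter variables become inert and one obtains honest left-invertibility of \(q\).

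The paper avoids all of this by reading off the \(\spadesuit\)-coordinate rather than \(\clubsuit\). There \(\chi_{e,\clubsuit}^{\spadesuit} = 0 = a_0 * b_0 \in F\), so one obtains \(a_0 = t(a_0 * b_0,\ldots)\) directly as an equation in the free algebra \(\alg{F}\), and freeness yields left-invertibility of \(*\) at once. This is the cleaner route and explains why the index set \(\Gamma\) carries two auxiliary coordinates \(\clubsuit,\spadesuit\) rather than one.
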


  To prove this, we must first show that no two distinct off-diagonal generators are \( \leq \)-related. This is done by exhaustive case analysis; none of the cases are hard, but there are a lot of them. We show two, and leave the rest to the skeptic.

  For our first model case, suppose \(v \) is a red vertex and \( \chi_v \leq \chi_{e,\clubsuit} \) for some \(e\). Then for some essentially unary term \(t(v_0, \ldots) \),
  \begin{align*}
    \chi_v &= t(\chi_{e,\clubsuit}) \\
    a_0 = \left(\chi_v\right)^\spadesuit &= t\left(\left(\chi_{e,\clubsuit}\right)^\spadesuit,\ldots \right) = t(0,\ldots) = t(a_0 * b_0, \ldots)
  \end{align*}Since \(a_0,b_0\) were free generators, this would imply that the operation \(v_0 * v_1\) is left-invertible at \(v_0\), a contradiction.

  Next suppose \( \chi_{e,\clubsuit} \leq \chi_v \). Then
  \begin{align*}
    \chi_{e,\clubsuit} &= t( \chi_v, \ldots) \\
    z = \left( \chi_{e,\clubsuit} \right)^\clubsuit &= t\left( \left( \chi_v\right)^\clubsuit, \ldots \right) \in F
  \end{align*}a contradiction. The rest of the cases are handled similarly.

  So we have that if we have generators \( \point{x}_1 \leq \point{x}_2 \) then \( \point{x}_1 = \point{x}_2 \). Now: suppose that \( \point{y} \in \NRINV \). We have \(\point{y} = t(\point{x}_1, \ldots, \point{x}_n) \) for some term \(t\) and some generators \( \point{x}_k \). But by assumption, \(t\) is right-invertible, hence depends only on one variable (say the first). In other words \( \point{y} \leq \point{x}_1 \). Hence every maximal \(\sim\)-class containing a member of \( \NRINV \) contains a generator.

  Lastly, if \( \point{x}_0 \) is an off-diagonal generator and \( \point{x}_0 \point{y} \in \NRINV \), then \( \point{x}_0 \leq \point{y} \leq \point{x}_1 \) for some generator \( \point{x}_1 \). By the previous part, \( \point{x}_0 = \point{x}_1 \). This shows that the \( \sim \)-class of every off-diagonal generator is maximal.
  \pfend{\ref{claim:sim separates NRINV}}

  Let \( \GEN \) be the set of all elements of \( D \) \(\sim\)-equivalent to an off-diagonal generator. As we have just seen, this set is uniformly definable: \( \point{y} \in \GEN \) if and only if
  \begin{quotation}
    \(\point{y} \in \NRINV\) and for all \( \point{y}' \in \NRINV \), \( \point{y} \leq \point{y}' \rightarrow \point{y}' \leq \point{y} \).
  \end{quotation}

  We want to be able to distinguish between edge-type and vertex-type generators. To do this, first observe that for any edge \(e\), \( \chi_{e, \clubsuit} \propto \chi_{e, \spadesuit} \) since the relation holds in every factor. This prompts us to set \( \EDGEGEN \) to be the subset of \( \GEN \) consisting of all \( \point{x} \) such that
  \begin{quotation}
    There exist \( \point{x}', \point{y} \in \GEN \) with \( \point{x} \sim \point{x}'\), \( \point{x} \not \sim \point{y} \), and \( \point{x}' \propto \point{y} \). 
  \end{quotation}This set is clearly definable.

  \begin{claim}\label{claim:EDGEGEN defines edge gens}
    For \( \point{y} \in \GEN \), \( \point{y} \in \EDGEGEN \) if and only if the (unique) generator in \( \point{y} / \sim \) has edge type.
  \end{claim}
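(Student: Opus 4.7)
My plan is to prove both directions separately; the forward direction is direct, while the reverse direction requires a projection argument and some case analysis.

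Forward direction: Suppose the unique generator in $\point{y}/\sim$ is an edge generator, say $\chi_{e,\clubsuit}$ (the case of $\chi_{e,\spadesuit}$ is symmetric). I take $\point{x}' := \chi_{e,\clubsuit}$ and the second witness to be $\chi_{e,\spadesuit}$; both lie in $\GEN$, $\point{y} \sim \point{x}'$ by hypothesis, and $\point{x}' \not\sim \chi_{e,\spadesuit}$ by Claim~\ref{claim:sim separates NRINV} (distinct off-diagonal generators give distinct $\sim$-classes). The remaining condition $\chi_{e,\clubsuit} \propto \chi_{e,\spadesuit}$ holds coordinatewise: the two points agree outside $\{\clubsuit,\spadesuit\}$, and at those two coordinates the value pairs are $(z,0)$ and $(0,z)$, reducing the condition to $0 \propto z$ in $\alg{C}$, which was recorded just above Claim~\ref{claim:0123 not propto}.

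Reverse direction: I argue by contradiction. Suppose $\point{y} \sim \chi_v$ for a vertex $v$ (WLOG red) and that $\point{y} \in \EDGEGEN$ is witnessed by $\point{x}', \point{z} \in \GEN$ with $\point{x}' \sim \chi_v$, $\point{z} \sim \chi_w$ for some off-diagonal generator $w \neq v$, and $\point{x}' \propto \point{z}$ in $\alg{D}$. Since $\propto$ is a conjunction of universal equations, it is preserved by surjective homomorphisms; projecting at coordinate $v$ gives $\point{x}'^v \propto \point{z}^v$ in $\pi^v(\alg{D})$. I would verify that $\pi^v(\alg{D}) = \alg{F}$: every generator of $\alg{D}$ projects at $v$ into $F$ (because $v \notin \{\clubsuit,\spadesuit\}$), and conversely the projections include $X$ (from the diagonals) together with $a_0,a_1,b_0,b_1,0,1,2$ (from the off-diagonal generators), which already generate $\alg{F}$. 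Hence $\point{x}'^v \propto \point{z}^v$ in $\alg{F}$.

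Writing $\point{x}'^v = s(a_1)$ and $\point{z}^v = s'(\chi_w^v)$ for essentially unary $\frzflt{L}{\tau}$-terms $s,s'$, the value $\chi_w^v$ lies in $\{a_0, b_0, 0, 2\}$ according to whether $w$ is a red vertex distinct from $v$, a blue vertex, an edge with $v$ as its red endpoint, or any other edge. In each case I would derive the contradiction by applying a carefully chosen non-left-invertible term from $N_{\langle i_0,j_0\rangle}$ — for instance $v_0 \mapsto v_0 * b_0$ in the red-vertex case, which is non-left-invertible precisely because $q$ was chosen non-left-invertible at $v_1$ — and then combining the resulting equation with the reverse direction of $\sim$ (which peels off $s$ and $s'$) and the pairwise $\propto$-inequivalences in Claim~\ref{claim:0123 not propto} to force an impossible identity between distinct free generators of $\alg{F}$.

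The main obstacle is the case analysis over the type of $w$ together with the parallel treatment when $v$ is blue; none of the sub-cases is individually hard once the projection to $\alg{F}$ has reduced the problem to freeness, but some bookkeeping is required to ensure that the separating non-left-invertible term has output sort matching that of $\point{x}'^v$ and to extract usable syntactic information from the essentially unary wrappers $s$ and $s'$.
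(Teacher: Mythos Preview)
Your forward direction is fine and matches the paper's (which simply notes that $\chi_{e,\clubsuit} \propto \chi_{e,\spadesuit}$ coordinatewise, so both land in $\EDGEGEN$).

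The reverse direction, however, takes a different route from the paper and has a real gap. The paper first applies the right-invertible term $g_2$ (the reverse of your $s'$) to the relation $\point{x}' \propto \point{z}$, obtaining $\gamma \propto g_2 f_1(\chi_v)$ in $\alg{D}$. It then projects at \emph{two} coordinates, both different from $v$: one coordinate where $\gamma$ takes a distinguishing value (the vertex $w$ in Case~1, or an endpoint $u \in e \setminus \{v\}$ in Case~2), and the coordinate $\clubsuit$. The crucial point is that $\chi_v$ is constant away from $v$, so $g_2 f_1(\chi_v)$ takes the \emph{same} value $c$ at both coordinates; transitivity then gives $\gamma^u \propto \gamma^{\clubsuit}$ directly (e.g.\ $a_1 \propto a_0$ or $2 \propto z$), and the unknown wrapper $g_2 f_1$ has disappeared.

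Your single projection at $v$ does not enjoy this cancellation. After projecting you have $s(a_1) \propto s'(\chi_w^v)$ in $\alg{F}$, and your proposed ``peeling'' can only remove one wrapper at a time: applying $g_2$ gives $g_2 f_1(a_1) \propto \chi_w^v$, and now the right-invertible term $h := g_2 f_1$ is still present. In the subcase where $\gamma$ is an edge generator with $v$ as an endpoint, $\chi_w^v = 2$, and you would need to argue that $h(a_1) \propto 2$ is impossible in $\alg{F}$ for an arbitrary right-invertible $h$; this is not covered by Claim~\ref{claim:0123 not propto} and is not obvious. There are also smaller slips: the non-left-invertible term you propose, $v_0 \mapsto v_0 * b_0$, lies in $N_{\text{sort}(v_1)}$, not in $N_{\langle i_0,j_0\rangle}$ as you write, and in general the relevant $N$ is $N_\sigma$ for the (unknown) common sort $\sigma$ of $\point{x}'$ and $\point{z}$; and your list of values of $\chi_w^v$ has the two edge cases reversed (the edge with $v$ as red endpoint gives $2$, any other edge gives $0$). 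The fix is to adopt the paper's two-coordinate comparison away from $v$.
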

  
  Proof: By construction, each \( \chi_{e, \clubsuit} \) and each \( \chi_{e, \spadesuit} \) belong to \( \EDGEGEN \). Also, \( \EDGEGEN \) is clearly a union of \( \sim \)-classes.

  Hence it suffices to show that \( \chi_v \notin \EDGEGEN \) for any vertex \(v\). Suppose this were false: then we would have elements \( \point{x} \sim \chi_v \) and \( \point{y} \not \sim \chi_v\) with \( \point{x} \propto \point{y} \). Let \( \gamma \) be the generator \( \sim \)-equivalent to \( \point{y} \).
  
  Since \( \point{x} \sim \chi_v\), they are connected by essentially unary terms
  \[ \point{x} = f_1(\chi_v) \qquad \chi_v = f_2(\point{x}) \]and likewise
  \[ \point{y} = g_1(\gamma) \qquad \gamma = g_2(\point{y}) \]Since all four of these elements are in \( \GEN \), the terms \(f_k, g_k\) must in fact be right-invertible. By Proposition \ref{prop:RinvPreservesPropto},
  \[ \gamma = g_2(\point{y}) \propto g_2(\point{x}) = g_2 \circ f_1(\chi_v) \]Note that \(g_2 \circ f_1 \) is right-invertible.

  Case 1: \( \gamma = \chi_w\) for some \(w \neq v\).

  Without loss of generality, \(w\) is a red vertex. We have \( \chi_v^w = \chi_v^\clubsuit \), so
  \[ a_0 = \gamma^\clubsuit \propto g_2 \circ f_1 (\chi_v^\clubsuit) = g_2 \circ f_1 (\chi_v^w) \propto \gamma^w = a_1 \]which is impossible.

  Case 2: \( \gamma = \chi_{e,\clubsuit} \) for some edge. Then \(e\) contains an endpoint \(w \neq v\), which we may suppose again to be red.

  Since \( \chi_v^w = \chi_v^\clubsuit \),
  \[ 2 = \gamma^w \propto g_2 \circ f_1 (\chi_v^w) = g_2 \circ f_1(\chi_v^\clubsuit) \propto \gamma^\clubsuit = z \]But this is likewise impossible.\pfend{\ref{claim:EDGEGEN defines edge gens}}
  
  With this in hand, we know that the set \( \VERTEXGEN \) of all \( \point{x} \in \GEN \) which are not in \( \EDGEGEN \) is (uniformly first-order) definable. This set is, of course, better known as the set of all \( \point{x} \) which are \( \sim \)-equivalent to one of the \( \chi_v \).

  Lastly, let \( \EDGE(x,y) \) be a formula asserting that
  \begin{quotation}
    \( x \in \VERTEXGEN \) and \( y \in \VERTEXGEN \) and there exist \( x' \sim x \), \( y' \sim y \) and \( w \in \EDGEGEN \) such that \( w \propto x' * y' \).
  \end{quotation}

  \begin{claim}\label{claim:EDGE defines edges}
    For \( \point{x}, \point{y} \in \VERTEXGEN \), \( \alg{D} \models \EDGE(\point{x}, \point{y}) \) iff there exists an edge \( e = \{v,w\} \) such that \( \point{x} \sim \chi_v \) and \( \point{y} \sim \chi_w \). 
  \end{claim}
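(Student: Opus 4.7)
The plan is to verify each direction of the biconditional, with the substance of the argument in the forward implication.

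\textbf{Backward direction.} Given an edge $e = \{v, v'\}$ of $\mathbb{G}$ with $v \in R^{\mathbb{G}}$, $v' \in B^{\mathbb{G}}$, and elements $\point{x} \sim \chi_v$, $\point{y} \sim \chi_{v'}$, I take $\point{x}' = \chi_v$, $\point{y}' = \chi_{v'}$, and $\point{w} = \chi_{e,\clubsuit} \in \EDGEGEN$, and verify $\chi_{e,\clubsuit} \propto \chi_v * \chi_{v'}$ coordinate by coordinate. The two points agree at $v$ (value $2$), at $v'$ (value $1$), and at $\spadesuit$ and at every other vertex- and edge-indexed coordinate (value $0$); at coordinate $\clubsuit$ the values are $z$ and $0$, but $0 \propto z$ was established earlier. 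Because $\propto$ on a subpower reduces to coordinate-wise $\propto$, this suffices.

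\textbf{Forward direction.} Assume $\EDGE(\point{x}, \point{y})$ holds, witnessed by $\point{x}' \sim \point{x}$, $\point{y}' \sim \point{y}$, and $\point{w} \in \EDGEGEN$ with $\point{w} \propto \point{x}' * \point{y}'$. The sort signature of $*$ forces $\point{x} \sim \chi_v$ for some $v \in R^{\mathbb{G}}$ and $\point{y} \sim \chi_{v'}$ for some $v' \in B^{\mathbb{G}}$, and since $\VERTEXGEN$ and $\EDGEGEN$ are unions of $\sim$-classes we also have $\point{x}' \sim \chi_v$, $\point{y}' \sim \chi_{v'}$, and $\point{w} \sim \chi_{e,\sigma}$ for some edge $e = \{u, u'\}$ (with $u \in R^{\mathbb{G}}$, $u' \in B^{\mathbb{G}}$) and some $\sigma \in \{\clubsuit, \spadesuit\}$. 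Write $\point{x}' = f(\chi_v)$, $\chi_v = \bar{f}(\point{x}')$ and analogously $g, \bar{g}, h, \bar{h}$. All six points are in $\NRINV$, so each of $f, \bar{f}, g, \bar{g}, h, \bar{h}$ must be right-invertible. Evaluating $\chi_v = \bar{f}(f(\chi_v))$ coordinate-wise at $v$ and at any coordinate $\neq v$, and invoking freeness of $a_0, a_1$, promotes $\bar{f} \circ f$ to the identity term on its sort; combined with right-invertibility of $f$, this forces $f \circ \bar{f} = \mathrm{id}$ as well, so $f$ is a term-bijection with inverse $\bar{f}$. The same argument applied to $\chi_{v'}$ (using $b_0, b_1$) and to $\chi_{e,\sigma}$ (using the free generator $z$, which appears at coordinate $\sigma$) makes $g$ and $h$ term-bijections too.

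The key auxiliary lemma is that any term-bijection $h$ reflects $\propto$: if $h(a) \propto h(b)$ then $a \propto b$. Indeed, for any $t(v_0, \vec{u}) \in N$, the composite $t(h^{-1}(v_0), \vec{u})$ lies in $N$ as well---a left-inverse of the composite would post-compose with $h$ to left-invert $t$---and applying $h(a) \propto h(b)$ to this composite gives $t(a, \vec{u}) = t(b, \vec{u})$.

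Now suppose for contradiction that $\{u, u'\} \neq \{v, v'\}$; without loss of generality $u \neq v$ (the case $u' \neq v'$ is symmetric). At coordinate $u$ (a red vertex, distinct from $v$, $v'$, $\clubsuit$, $\spadesuit$) the hypothesis $\point{w} \propto \point{x}' * \point{y}'$ gives $h(2) \propto f(a_0) * g(b_0)$; at the coordinate $\kappa \in \{\clubsuit, \spadesuit\} \setminus \{\sigma\}$ it gives $h(0) \propto f(a_0) * g(b_0)$. By transitivity $h(2) \propto h(0)$, and the reflection lemma yields $2 \propto 0$, contradicting Claim \ref{claim:0123 not propto}. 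The symmetric case $u' \neq v'$ produces $1 \propto 0$ in the same way. Hence $\{u, u'\} = \{v, v'\}$. The main obstacle is assembling the bijectivity-and-reflection package for $h$; once that is in place, the two-coordinate comparison collapses cleanly to a violation of Claim \ref{claim:0123 not propto}.
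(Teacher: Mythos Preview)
Your proof is correct, and the backward direction matches the paper's. In the forward direction, however, you take a noticeably different route from the paper.

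The paper chooses its connecting terms in the opposite direction from yours: it picks right-invertible $f$ with $\chi_{e,\clubsuit} = f(\point{w})$ (and $g,h$ with $\point{x}' = g(\chi_v)$, $\point{y}' = h(\chi_w)$), applies Proposition~\ref{prop:RinvPreservesPropto} once to get $\chi_{e,\clubsuit} \propto f(g(\chi_v)*h(\chi_w))$, and then compares at coordinates $u$ and $\sigma$. Since $\chi_v$ and $\chi_w$ take the same values $a_0,b_0$ at both coordinates, the right-hand side is literally equal at $u$ and $\sigma$, so one reads off $2 \propto z$ directly (and hence $2\propto 0$ via $0\propto z$). No bijectivity or reflection is needed.

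Your argument instead writes $\point{w}=h(\chi_{e,\sigma})$, upgrades $f,g,h$ to term-bijections using the freeness of $a_0,b_0,z$ (for $h$ this relies on Proposition~\ref{prop:zisolated} to lift $\bar h(h(z))=z$ from $\alg{C}$ to $\alg{F}'$), proves a reflection lemma for $\propto$ under term-bijections, and compares at $u$ and $\kappa\in\{\clubsuit,\spadesuit\}\setminus\{\sigma\}$ to obtain $h(2)\propto h(0)$, then reflects. This is sound; note your reflection lemma is in fact equivalent to applying Proposition~\ref{prop:RinvPreservesPropto} to $h^{-1}=\bar h$, which is right-invertible since $h\circ\bar h=\mathrm{id}$. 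What the paper's approach buys is economy: one invocation of an existing proposition, no new lemmas. What yours buys is a reusable reflection principle and a coordinate comparison that stays entirely within $\{0,1,2,3\}$ without passing through $z$.
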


  Proof: (\( \Leftarrow\)): If the red vertex \(v\) has an edge to the blue vertex \(w\), then
  \begin{align*}
    \chi_v * \chi_w &= \left( {a_1}_{|v} \oplus {a_0}_{|\mathrm{else}} \right) * \left( {b_1}_{|w} \oplus {b_0}_{|\mathrm{else}} \right) \\
    &= {a_1 * b_0}_{|v} \oplus {a_0 * b_1}_{|w} \oplus {a_0 * b_0}_{|\mathrm{else}} \\
    &= 2_{|v} \oplus 1_{|w} \oplus 0_{|\mathrm{else}} \\
    &\propto 2_{|v} \oplus 1_{|w} \oplus z_{|\clubsuit} \oplus 0_{|\mathrm{else}} \\
    &= \chi_{e, \clubsuit}
  \end{align*}

  (\(\Rightarrow\)): Assume \( \EDGE(\point{x},\point{y}) \). Fix 
  \begin{align*} \point{x}' &\sim \point{x} \sim \chi_v \\  
    \point{y}' &\sim \point{y} \sim \chi_w \\ 
    \point{x}' * \point{y}' &\propto \point{w} \sim \chi_{e,\clubsuit} \end{align*}(The proof is the same if \(w \sim \chi_{e,\spadesuit}\).)

  Since all these points are members of \( \GEN \), we may choose right-invertible terms so that
  \[ \chi_{e, \clubsuit} = f(\point{w}) \qquad \point{x}' = g(\chi_v) \qquad \point{y}' = h(\chi_w) \]
  Then \( f \) is right-invertible and 
  \[f(\point{w}) \propto f(\point{x}' * \point{y}') = f(g(\chi_v) * h(\chi_w)) \]We will be done if we can show that \(e = \inangle{v,w}\).

  If this were false, we could choose an endpoint \(u \in e \setminus \{v,w\}\), which we may suppose is red. Then
  \[ \chi_v^u = \chi_v^\clubsuit = a_0 \qquad \chi_w^u = \chi_w^\clubsuit = b_0 \]so
  \begin{align*} 2 = \chi_{e,\clubsuit}^u = f(w^u) \propto f(g(\chi_v^u) * h(\chi_w^u))& \\= f(g(\chi_v^\clubsuit) * h(\chi_w^\clubsuit)) & \propto f(w^\clubsuit) = \chi_{e, \clubsuit}^\clubsuit = z \end{align*}a contradiction.
  \pfend{\ref{claim:EDGE defines edges}}

  Observe that since \( \mathbb{G} \) has no isolated vertices, the subsets \( \VERTEXRED \) and \( \VERTEXBLUE \) of \( \VERTEXGEN \) consisting of those \( \point{x} \) which are \( \sim \)-equivalent to a red (resp.~a blue) vertex are definable using the \( \EDGE \) relation.
  
  The foregoing shows that
  \[ \inangle{ \VERTEXRED/\sim, \VERTEXBLUE/\sim \; ;\; \EDGE } \]is isomorphic to our original bipartite graph \( \mathbb{G} \); since all the relations in this isomorphism are uniformly definable, we have effected a semantic embedding of bipartite graphs into \( \HSP{\frzflt{\alg{A}}{\tau}} \).
\end{proof}

\begin{proof}[Proof of Theorem \ref{thm:main}]
  Theorem \ref{thm:bipartite into frzflt} shows that, if \( \alg{A} \) has a \(\tau\)-boxmap depending on too many variables, then \( \HSP{\frzflt{\alg{A}}{\tau}} \) is hereditarily finitely undecidable. But we have already seen in Lemma \ref{lemma:frzflt into A} that \( \HSP{\frzflt{\alg{A}}{\tau}} \) semantically embeds into \( \HSP{\alg{A}} \). Since semantic interpretability is transitive, we are done.
\end{proof}

\section{Problems}

We have seen that the definition of \( \frzflt{\alg{A}}{\tau} \) only makes sense when \( \tau \) is strongly abelian. The reason for introducing the intermediate language \( \frz{L}{\tau} \) is, we hope, to allow us to get a better handle on congruence intervals admitting both types 1 and 2 in these varieties. It is known (\cite{Val1994}) that the (1,2) and (2,1) transfer principles must hold in solvable congruence intervals. This can be read to say that, if we consider the congruence lattice of an algebra lying in a finitely decidable variety and consider the greatest congruence \( \sigma_1 \) such that \( [\bot,\sigma_1]\) has only type-1 covers, and likewise \( \sigma_2\), then these two congruences act a bit like direct factor congruences. However, the following problems are open:

\begin{problem}
  In a finite algebra \( \alg{A} \) in a finitely decidable variety, must \( \sigma_1 \) permute with \( \sigma_2 \)?
\end{problem}

\begin{problem}
  If \( \alg{A} \) is abelian, must \( \alg{A} \cong \alg{A}/\sigma_1 \times \alg{A}/\sigma_2\)? 
\end{problem}

\begin{problem}\label{problem:xp}
  If \( \tau = \sigma_1 \lor \sigma_2\) denotes the solvable radical of \( \alg{A} \), must \( \frz{\alg{A}}{\tau} \) be the direct product of \( \frz{\left( \alg{A}/\sigma_1 \right)}{\tau} \) and \( \frz{\left( \alg{A}/\sigma_2 \right)}{\tau} \) in the sense of \( \frz{L}{\tau} \)?
\end{problem}

\begin{problem}
  The same as \ref{problem:xp}, except with the added assumption that every congruence of \( \alg{A} \) is comparable to \( \tau \).
\end{problem}

Lastly, our arguments in this paper have used that whenever \( \tau \) is strongly abelian, the variety \( \HSP{\frzflt{\alg{A}}{\tau}} \) is finitely axiomatizable. We suspect that finite axiomatizability should hold much more broadly:

\begin{problem}
  \begin{enumerate}
  \item If \( \var{V} \) is a finitely decidable locally finite variety, is \( \var{V} \) finitely axiomatizable?
  \item Same question, but restricted to finitely generated \( \var{V} \).
  \item If the finite algebra \( \alg{A} \) with solvable radical \( \tau \) generates a finitely decidable variety, is \( \HSP{\frz{\alg{A}}{\tau}} \) finitely axiomatizable?
  \end{enumerate}
\end{problem}

\end{document}